\providecommand{\U}[1]{\protect\rule{.1in}{.1in}}
\providecommand{\U}[1]{\protect\rule{.1in}{.1in}}
\newtheorem{theorem}{Theorem}[section]
\newtheorem{condition}[theorem]{Condition}
\newtheorem{definition}[theorem]{Definition}
\newtheorem{example}[theorem]{Example}
\newtheorem{lemma}[theorem]{Lemma}
\newtheorem{proposition}[theorem]{Proposition}
\newtheorem{remark}[theorem]{Remark}
\newcommand{\uemptyset }{{\boldsymbol{\emptyset}}}
\begin{document}
\title[Crystal isomorphisms and wall crossing maps]{Crystal isomorphisms and
wall crossing maps for rational Cherednik algebras }
\author{Nicolas Jacon and C\'edric Lecouvey}
\date{March 2016}

\begin{abstract}
We show that the wall crossing bijections between simples of the category $%
\mathcal{O}$ of the rational Cherednik algebras reduce to particular crystal
isomorphisms which can be computed by a simple combinatorial procedure on
multipartitions of fixed rank.
\end{abstract}

\maketitle

\section{Introduction}

The rational Cherednik algebra associated to a complex reflection group $W$
was introduced by Etingof and Ginzburg in \cite{EtiGin} as a particular
symplectic reflection algebra. In this paper, we will focus on the case
where $W$ is the complex reflection group $G(l,1,n):=(\mathbb{Z}/l\mathbb{Z}%
)^{n}\rtimes \mathfrak{S}_{n}$. The rational Cherednik algebra then depends
on the choice of a certain parameter $s$ in a $(l+1)$-dimensional $\mathbb{C}
$-vector space. There is a distinguished category of modules over these
algebras, the category $\mathcal{O}$, which may be constructed in the same
spirit as the BGG category for a reductive Lie algebra. This category has
been intensively studied during the last decade because of its interesting
structure and also its connection with other important mathematical objects
such as cyclotomic Hecke algebras or cyclotomic $q$-Schur algebras.

As the simple modules of the associated complex reflection groups, the
simple modules in this category are labelled by the set of multipartitions.\
A natural and important question is then to understand how are related the
set of simple modules in the category $\mathcal{O}$ for different choices of
the parameter $s$. In \cite{Lo}, Losev has defined a collection of
hyperplanes in the space of parameters called \textquotedblleft essential
walls\textquotedblright . Then, he has shown the existence of a perverse and
derived equivalence between the categories $\mathcal{O}$ associated to
distinct parameters $s$ and $s^{\prime }$ separated by a single wall. In
particular, this equivalence induces a bijection between the simples of
these categories. These bijections, called \textquotedblleft wall crossing
maps\textquotedblright , are of great interest because they commute with
both actions of the Heisenberg algebra and the affine type $A$ algebra $%
\mathfrak{g}$. In particular, they can be used to compute the support of the
simple modules in the category $\mathcal{O}$ and to obtain a classification
of the finite dimensional irreducible representations. The goal of the
present paper is to show that, despite their very abstract definition, there
is a simple combinatorial procedure to compute them.

Because of the above properties, these bijections can also be interpreted as
crystal isomorphisms for certain integrable $\mathfrak{g}$-modules: the Fock
spaces. On the other hand, in \cite{JL}, the authors of the present paper
have described distinguished crystal isomorphisms between such Fock spaces
and presented a simple procedure on multipartitions of fixed rank to compute
them. The aim of this paper is to show how these two isomorphisms are
related. To do this, we first review the Uglov $\mathfrak{g}$-module
structure of the Fock space and explain how it can be extended. We then
construct and explicitly describe crystal isomorphisms corresponding to this
generalized structure. The last section explores the connections of these
isomorphisms with Losev's wall crossing bijections. We explain how the
crystals appearing in the context of Cherednik algebras are related with the
usual crystals of Fock spaces (in the sense of Uglov). We obtain in
particular an easy criterion to decide whether an $l$-partition is a highest
weight vertex for this structure. Finally, the main result gives a simple
combinatorial procedure on partitions of fixed rank for computing an
arbitrary wall crossing bijection without referring to any crystal structure

\vspace{0.5cm}

\noindent\textbf{Acknowledgement.} The authors thank Ivan Losev for useful
discussions on his work \cite{Lo,Losurvey}.\newline
Both Authors are supported by ``Projet ANR ACORT : ANR-12-JS01-0003".

\section{Colored graphs and Fock spaces of JMMO type}

We first describe structures of colored oriented graphs on the sets of $l$%
-partitions (that we define below), then focus on a particular case which is
connected to the action of the quantum affine group ${\mathcal{U}_{q}(%
\widehat{\mathfrak{sl}_{e}})}$.

\subsection{Combinatorics of $l$-partitions}

\begin{definition}
\label{def1} Let $n\in\mathbb{N}$ and $l\in\mathbb{N}$:

\begin{itemize}
\item A \textit{partition} $\lambda$ of rank $n$ is a sequence 
\begin{equation*}
(\lambda_{1},\ldots,\lambda_{r}), 
\end{equation*}
of decreasing non negative integers such that 
\begin{equation*}
\sum_{1\leq i\leq r}\lambda_{i}=n. 
\end{equation*}
By convention, we identify two partitions which differ by parts equal to $0$.

\item A \textit{$l$-partition} (or multipartition) ${\boldsymbol{\lambda}}$
of rank $n$ is an $l$-tuple of partitions $(\lambda^{1},\ldots,\lambda^{l})$
such that the sum of the rank of the partitions $\lambda^{i}$ for $1\leq
i\leq l$ is $n$. We denote by $\Pi^{l}(n)$ the set of all $l$-partitions of
rank $n$ and if ${\boldsymbol{\lambda}}\in\Pi^{l}(n)$, we sometimes write ${%
\boldsymbol{\lambda}}\vdash_{l}n$. The empty $l$-partition (which is the $l$%
-tuple of empty partitions) is denoted by $\uemptyset$.
\end{itemize}
\end{definition}

Let $\kappa\in\mathbb{Q}_{+}$ and $\mathbf{s}=(s_{1},s_{2},\ldots,s_{l})\in%
\mathbb{Q}^{l}$. We set $s:=(\kappa,\mathbf{s})$.

One can associate to each ${\boldsymbol{\lambda}}\vdash_{l}n$ its \textit{%
Young diagram}: 
\begin{equation*}
\lbrack{\boldsymbol{\lambda}}]=\{(a,b,c)\ a\geq1,\ 1\leq c\leq l,1\leq
b\leq\lambda_{a}^{c}\}. 
\end{equation*}
This diagram will be sometimes identify with the $l$-partition itself. We
define the $s$-\textit{content} of a node $\gamma=(a,b,c)\in\lbrack {%
\boldsymbol{\lambda}}]$ as follows: 
\begin{equation*}
\text{cont}(\gamma)=b-a+s_{c}\in\mathbb{Q}, 
\end{equation*}
and the \textit{residue }of $\gamma$ is by definition the content of the
node in $\mathbb{Q}/\kappa^{-1}\mathbb{Z}$.

\begin{definition}
\label{DefIs}Let $I_{s}$ be the subset of $\mathbb{Q}/\kappa^{-1}\mathbb{Z}$
formed by the classes $x+s_{j}+\kappa^{-1}\mathbb{Z},j=1,\ldots,l$ where $%
x\in\mathbb{Z}$ and $j\in\{1,\ldots,l\}$.
\end{definition}

We say that $\gamma$ is a \textit{$z$}-node of ${\boldsymbol{\lambda}}$ when 
$\mathrm{res}(\gamma)=z+\kappa^{-1}\mathbb{Z}.$ Finally, we say that $\gamma$
is \textit{removable} when $\gamma=(a,b,c)\in{\boldsymbol{\lambda}}$ and ${%
\boldsymbol{\lambda}}\backslash\{\gamma\}$ is an $l$-partition. Similarly $%
\gamma$ is \textit{addable} when $\gamma=(a,b,c)\notin{\boldsymbol{\lambda}}$
and ${\boldsymbol{\lambda}}\cup\{\gamma\}$ is an $l$-partition.

\bigskip

Fix $z\in I_{s}$ . We assume that we have a total order $\leq$ on the set of 
$z$-nodes of an arbitrary $l$-partition. We then define two operators
depending on $z$ as follows. We consider the set of addable and removable $z$%
-nodes of our $l$-partition. Let $w_{z}({\boldsymbol{\lambda}})$ be the word
obtained first by writing the addable and removable $z$-nodes of ${{{%
\boldsymbol{\lambda}}}}$ in {increasing} order with respect to $\leq$ next
by encoding each addable $z$-node by the letter $A$ and each removable $i$%
-node by the letter $R$.\ Write $\widetilde{w}_{z}({\boldsymbol{\lambda}}%
)=A^{p}R^{q}$ for the word derived from $w_{z}$ by deleting as many subwords
of type $RA$ as possible. The word $w_{z}({\boldsymbol{\lambda}})$ is called
the {$z$-word} of ${\boldsymbol{\lambda}}$ and $\widetilde{w}_{z}({%
\boldsymbol{\lambda}})$ the {reduced $z$-word} of ${\boldsymbol{\lambda}}$.
The addable $z$-nodes in $\widetilde{w}_{z}({\boldsymbol{\lambda}})$ are
called the \textit{normal addable $z$-nodes}. The removable $z$-nodes in $%
\widetilde{w}_{z}({\boldsymbol{\lambda}})$ are called the \textit{normal
removable $z$-nodes}. If $p>0,$ let $\gamma$ be the rightmost addable $z$%
-node in $\widetilde{w}_{z}$. The node $\gamma$ is called the \textit{good
addable $z$-node}. If $q>0$, the leftmost removable $i$-node in $\widetilde{w%
}_{z}$ is called the \textit{good removable $z$-node}.

We then define $\widetilde{e}_{z}^{\leq}{\boldsymbol{\mu}}={\boldsymbol{%
\lambda}}$ and $\widetilde{f}_{z}^{\leq}{\boldsymbol{\lambda}}={\boldsymbol{%
\mu}}$ if and only if ${{{\boldsymbol{\mu}}}}$ is obtained from ${%
\boldsymbol{\lambda}}$ by adding to ${{{\boldsymbol{\lambda}}}}$ a good
addable $z$-node, or equivalently, ${\boldsymbol{\lambda}}$ is obtained from 
${\boldsymbol{\mu}}$ by removing a good removable $z$-node. If ${\boldsymbol{%
\mu}}$ as no good removable $z$-node then we set $\widetilde {e}_{z}^{\leq}{%
\boldsymbol{\mu}}=0$ and if ${\boldsymbol{\lambda}}$ has no good addable $z$%
-node we set $\widetilde{f}_{z}^{\leq}{\boldsymbol{\lambda}}=0$.

\subsection{Extended JMMO structure associated to $s$}

\label{Gr} We can define from $s$ and $\leq$ a colored oriented graph $%
\mathcal{G}_{s,\leq}$ as follows:

\begin{itemize}
\item vertices : the $l$-partitions ${\boldsymbol{\lambda}}\vdash_{l} n$
with $n\in\mathbb{Z}_{\geq0}$

\item the arrows are colored by elements in $I_{s}$ and we have: ${%
\boldsymbol{\lambda}}\overset{i}{\rightarrow}{\boldsymbol{\mu}}$ for $i\in%
\mathbb{Q}/\kappa^{-1}\mathbb{Z}$ if and only if $\widetilde{e}_{i}^{\leq}{%
\boldsymbol{\mu}}={\boldsymbol{\lambda}}$, or equivalently $\widetilde{f}%
_{i}^{\leq}{\boldsymbol{\lambda}}={\boldsymbol{\mu}}$.
\end{itemize}

The $l$-partitions such that $\widetilde{e}_{z}{\boldsymbol{\mu}}=0$ for all 
$z$ will be called \textit{highest weight vertices}. The set $I_{s}$ of
elements in $\mathbb{Q}/\kappa^{-1}\mathbb{Z}$ coloring the arrows is called
the indexing set of the graph. Observe the graph $\mathcal{G}_{s,\leq}$ is
not an affine $A_{e-1}^{(1)}$-crystal graph in general notably because its
indexing set can be distinct from $\mathbb{Z}/e\mathbb{Z}$.

\label{Giso}

Finally two graphs $\mathcal{G}_{s_{1},\leq_{1}}$ and $\mathcal{G}%
_{s_{1},\leq_{2}}$ on $l$-partitions with indexing sets $I_{1}$ and $I_{2}$
are isomorphic if there exists a bijection 
\begin{equation}
\Psi:\Pi^{l}(n)\rightarrow\Pi^{l}(n),   \label{Psi}
\end{equation}
and a bijection: 
\begin{equation}
\psi:I_{1}\rightarrow I_{2},   \label{psi}
\end{equation}
such that

\begin{itemize}
\item ${\boldsymbol{\lambda}}$ is a highest weight vertex in $\mathcal{G}%
_{s_{1},\leq_{1}}$ if and only if $\Psi({\boldsymbol{\lambda}})$ is a
highest weight vertex in $\mathcal{G}_{s_{2},\leq_{2}}$,

\item we have an arrow ${\boldsymbol{\lambda}}\overset{i}{\rightarrow }{%
\boldsymbol{\mu}}$ in $\mathcal{G}_{s_{1},\leq_{1}}$ if and only if we have
an arrow $\Psi({\boldsymbol{\lambda}})\overset{\psi(i)}{\rightarrow}\Psi({%
\boldsymbol{\mu}})$ in $\mathcal{G}_{s_{2},\leq_{2}}$.
\end{itemize}

In addition, if $\Psi$ is the identity, we will say that the two graphs are
equivalent (in particular, the graph structures coincide up to their
coloring). We will see that, for a good choice of the orders $\leq_{1}$ and $%
\leq_{2}$, the graphs $\mathcal{G}_{s_{1},\leq_{1}}$ and $\mathcal{G}%
_{s_{2},\leq_{2}}$ have the structure of a Kashiwara crystal graph. In this
case, a crystal isomorphism between $\mathcal{G}_{s_{1},\leq_{1}}$ and $%
\mathcal{G}_{s_{2},\leq_{2}}$ is a graph isomorphism such that $\psi=id$. In
particular, each crystal isomorphism yields a graph isomorphism but the
converse is false in general.

\subsection{Extended JMMO Fock space structure}

\label{act1}

We now define a Fock space structure that, as far as we know, first appeared
in the work of Gerber \cite{Ger}. This structure generalizes the ones
defined by Uglov \cite{uglov} and Jimbo-Misra-Miwa-Okado (JMMO) \cite{jmmo}.

\begin{condition}
We assume in this paragraph that $\mathbf{s}=(s_{1},\ldots,s_{l})\in \mathbb{%
Z}^{l}$ and $\kappa=1/e$ where $e\in\mathbb{N}_{>1}\sqcup\{\infty\}$.
\end{condition}

Let $\mathfrak{g}_{e}:={\mathcal{U}_{q}(\widehat{\mathfrak{sl}_{e}})}$ be
the quantum group of affine type $A_{e-1}^{(1)}$ if $e$ is finite, otherwise
we set $\mathfrak{g}_{\infty}:={\mathcal{U}_{q}({\mathfrak{sl}_{\infty}})}$.
The associative $\mathbb{Q}(q)$-algebra $\mathfrak{g}_{e}$ has generators $%
e_{i},f_{i},t_{i},t_{i}^{-1}$ (for $i=0,...,e-1$) and $\partial$.\ We refer
the reader to \cite[\S 2.1]{uglov} for the relations they satisfy (we do not
use them in the sequel.) We denote by ${\mathcal{U}_{q}^{\prime}(\widehat{%
\mathfrak{sl}_{e}})}$ the subalgebra generated by $%
e_{i},f_{i},t_{i},t_{i}^{-1}$ (for $i=0,...,e-1$). We write $%
\Lambda_{i},i=0,...,e-1$ for the fundamental weights of $\mathfrak{g}_{e}$.

\label{act2}The \textit{Fock space} $\mathcal{F}$ is the $\mathbb{Q}(q)$%
-vector space defined as follows: 
\begin{equation*}
\mathcal{F}=\bigoplus_{n\in\mathbb{Z}_{\geq0}}\bigoplus_{{\boldsymbol{%
\lambda }}\vdash_{l}n}\mathbb{Q}(q){\boldsymbol{\lambda}}. 
\end{equation*}
Set ${I}=\mathbb{Z}$ if $e=\infty$ and $I=\mathbb{Z}/e\mathbb{Z}$ otherwise.

For any $e\in\mathbb{N}_{>1}\sqcup\{\infty\}$, there is an action of $%
\mathfrak{g}_{e}$ on the Fock space $\mathcal{F}$. This action can be
regarded as a generalization of Uglov's construction \cite[\S 2.1]{uglov}.
It is defined by using an order on the nodes on the $l$-partitions with the
same residue modulo $e$. This order depends on the choice of an $l$-tuple of
rational numbers $\mathbf{m}=(m_{1},\ldots,m_{l})$. If $e$ is finite, for $%
1\leq i,j\leq l$ and $N\in\mathbb{Z}$, let us define 
\begin{equation*}
\mathfrak{m}_{i,j,N}^{\mathbf{s},e}:=\{(m_{1},\ldots,m_{l})\in\mathbb{Q}%
^{l}\ |s_{i}-m_{i}-(s_{j}-m_{j})=N.e\}. 
\end{equation*}
If $e=\infty$, we define for $1\leq i,j\leq l$ 
\begin{equation*}
\mathfrak{m}_{i,j}^{\mathbf{s},\infty}:=\{(m_{1},\ldots,m_{l})\in \mathbb{Q}%
^{l}\ |s_{i}-m_{i}-(s_{j}-m_{j})=0\}. 
\end{equation*}
Let $\mathfrak{M}^{\mathbf{s},e}$ be the union of the hyperplanes $\mathfrak{%
m}_{i,j,N}^{\mathbf{s},e}$ for all $1\leq i,j\leq l$ and $N\in\mathbb{Z}$ if 
$e$ is finite and $\mathfrak{m}_{i,j}^{\mathbf{s},\infty}$ for all $1\leq
i,j\leq l$ if $e=\infty$. Now consider $\mathbf{m}\notin\mathfrak{M}^{%
\mathbf{s},e}$. Let $\gamma$, $\gamma^{\prime}$ be two removable or addable $%
i$-nodes of ${\boldsymbol{\lambda}}$ for $i\in I_{s}$. We denote 
\begin{equation}
\gamma\preceq_{\mathbf{m}}\gamma^{\prime}\overset{\text{def}}{%
\Longleftrightarrow}b-a+m_{c}<b^{\prime}-a^{\prime}+m_{c^{\prime}}. 
\label{order_m}
\end{equation}
Thanks to our assumption $\mathbf{m}\notin\mathfrak{M}^{\mathbf{s},e}$, it
is easy to verify that the above definition indeed defines a total order on
the set of $i$-nodes of any $l$-partition. This thus yields a graph $%
\mathcal{G}_{e,\mathbf{m},\mathbf{s}}$ as in \S \ref{Gr}. It was also proved
in \cite{Ger} that one can mimic Uglov's Fock space construction and define
a $\mathfrak{g}_{e}$-action on the Fock space $\mathcal{F}$ from any order $%
\preceq_{\mathbf{m}}$. This gives an integrable $\mathfrak{g}_{e}$-module
that we denote by $\mathcal{F}_{e,\mathbf{m},\mathbf{s}}$. The submodule
generated by the empty $l$-partition is then an irreducible highest weight
module of weight $\Lambda_{\mathbf{s}}=\Lambda_{s_{1}}+\cdots+\Lambda_{s_{l}}
$.

\begin{remark}
\label{reegalite} Note that the Fock space $\mathcal{F}_{e,\mathbf{m},%
\mathbf{s}}$ depends on the choice of $\mathbf{m}$ (because of the order $%
\preceq_{\mathbf{m}}$) and on the choice of $\mathbf{s}$ modulo $e$ (because
of the definition of the residue of a node). Also in the case where $\mathbf{%
s}=(s_{1},\ldots,s_{l})$ and $\mathbf{s}^{\prime}=(s_{1}^{\prime
},\ldots,s_{l}^{\prime})$ satisfy $s_{j}\equiv s_{j}^{\prime}\ (\text{mod }e)
$ for $j=1,\ldots,l$, the associated Fock spaces can be identified and we
can write $\mathcal{F}_{e,\mathbf{m},\mathbf{s}}=\mathcal{F}_{e,\mathbf{m},%
\mathbf{s}^{\prime}}$.
\end{remark}

\begin{remark}
The inverse order $\preceq_{\mathbf{m}}^{-}$ of $\preceq_{\mathbf{m}}$ also
yields the structure of an integrable $\mathfrak{g}_{e}$-module on $\mathcal{%
F}$\ we denote by $\mathcal{F}_{e,\mathbf{m},\mathbf{s}}^{-}$.
\end{remark}

\subsection{Relations with JMMO Fock space structure (1)}

\label{delta}

The operators $\widetilde{e}_{z}^{\preceq_{\mathbf{m}}}$ and $\widetilde {f}%
_{z}^{\preceq_{\mathbf{m}}}$ defined from the order $\preceq_{\mathbf{m}}$
as in \S \ref{Gr} coincide in fact with the Kashiwara crystal operators and $%
\mathcal{G}_{e,\mathbf{m},\mathbf{s}}$ or $\mathcal{G}_{\infty,\mathbf{m},%
\mathbf{s}}$ are the crystal graphs corresponding to the $\mathfrak{g}_{e}$%
-module structure on our Fock space. To recover the crystal structure used
by Uglov (or the crystal structure introduced by JMMO), it suffices to
choose $\mathbf{m}$ such that 
\begin{equation*}
m_{c}=s_{c}+\delta_{c},\ c=1,\ldots,l, 
\end{equation*}
where $e>\delta_{1}>\cdots>\delta_{l}\geq0$. We will simply denote by $%
\mathcal{G}_{e,\mathbf{s}}$ this JMMO structure.

Now let us consider $\mathbf{m}^{\prime}\notin\mathfrak{M}^{\mathbf{s},e}$.
For $c=1,\ldots,l$, define $\delta_{c}^{\prime}$ to be the unique element of 
$\{0,1\ldots,e-1\}$ which is equivalent to $m_{c}^{\prime}-s_{c}$ modulo $e$%
. Thus there exists $(s_{1}^{\prime},\ldots s_{l}^{\prime})\in\mathbb{Z}^{l}$
such that $s_{j}\equiv s_{j}^{\prime}(\text{mod }e)$ and 
\begin{equation*}
m_{c}^{\prime}=s_{c}^{\prime}+\delta_{c}^{\prime},\ c=1,\ldots,l. 
\end{equation*}
We have 
\begin{equation*}
e>\delta_{\sigma(1)}^{\prime}>\cdots>\delta_{\sigma(l)}^{\prime}\geq0, 
\end{equation*}
for a permutation $\sigma\in\mathfrak{S}_{l}$. Then the map%
\begin{equation}
\left\{ 
\begin{array}{c}
\mathcal{F}_{e,\mathbf{m}^{\prime},\mathbf{s}}=\mathcal{F}_{e,\mathbf{m}%
^{\prime},\mathbf{s}^{\prime}}\overset{\sigma}{\rightarrow}\mathcal{F}%
_{e,\sigma(\mathbf{m}^{\prime}),\sigma(\mathbf{s})} \\ 
(\lambda^{1},\ldots,\lambda^{l})\mapsto(\lambda^{\sigma(1)},\ldots
,\lambda^{\sigma(l)})%
\end{array}
\right.   \label{iso_perm}
\end{equation}
is an isomorphism of $\mathfrak{g}_{e}$-modules. It also defines a crystal
isomorphism between the crystal $\mathcal{G}_{e,\mathbf{m}^{\prime},\mathbf{s%
}}$ and the JMMO crystal $\mathcal{G}_{e,\sigma(\mathbf{s})}$.

This implies that for any $\mathbf{m}_{1}\notin\mathfrak{M}^{\mathbf{s},e}$
and $\mathbf{m}_{2}\notin\mathfrak{M}^{\mathbf{s},e}$, the Fock spaces $%
\mathcal{F}_{e,\mathbf{m}_{1},\mathbf{s}}$ and $\mathcal{F}_{e,\mathbf{m}%
_{2},\mathbf{s}}$ are isomorphic. The crystals $\mathcal{G}_{e,\mathbf{m}%
_{1},\mathbf{s}}$ and $\mathcal{G}_{e,\mathbf{m}_{2},\mathbf{s}}$ are then
also isomorphic as crystals. This means they are isomorphic in the sense of 
\S\ \ref{Giso} with indexing set $\mathbb{Z}/e\mathbb{Z}$ and $\psi=id$. The
modules $\mathcal{F}_{e,\mathbf{m}_{1},\mathbf{s}}$ and $\mathcal{F}_{e,%
\mathbf{m}_{2},\mathbf{s}}$ are reducible in general, so such an isomorphism
is not unique. However, its restriction to the connected component of $%
\mathcal{G}_{e,\mathbf{m}_{1},\mathbf{s}}$ with empty highest weight vertex
yields the connected component of $\mathcal{G}_{e,\mathbf{m}_{2},\mathbf{s}}$
with empty highest weight vertex. In the next sections we shall study
certain \textquotedblleft canonical isomorphisms\textquotedblright\ for
graphs $\mathcal{G}_{e,\mathbf{m},\mathbf{s}}$ defined from a datum $%
s=(\kappa,\boldsymbol{s})$ more general than when $\kappa=\frac{1}{e}$ and $%
\boldsymbol{s}\in\mathbb{Z}^{l}$.

%\subsection{Changing the coloring of the JMMO crystal}
%\begin{condition}
%In the rest of the paper we shall assume that $s=(\kappa,\boldsymbol{s})$
%satisfies $\kappa=\frac{r}{e}$ with $r\in\mathbb{Z}_{>0}$ and $\boldsymbol{s}%
%\in\frac{1}{r}\mathbb{Z}^{l}$.
%\end{condition}
%Consider $\mathbf{m}\notin\mathfrak{M}^{\mathbf{s},e}$. Given $z\in
%\mathbb{Q}/\kappa^{-1}\mathbb{Z}$, we can define a total order $\preceq
%_{\mathbf{m}}$ as in (\ref{order_m}) on the $z$-nodes of any $l$-partition.
%This yields a graph structure that we yet denote by $\mathcal{G}%
%_{e,\mathbf{m},\mathbf{s}}$ although it is not a Kashiwara crystal in
%general.\ Let us first examine how it is related to the JMMO extended crystal
%structure defined in \S \ \ref{act1}.
%\label{SubsecChangeColor}By definition of the $\boldsymbol{s}$-content of a
%node, the arrows in our crystal $\mathcal{G}_{e,\mathbf{m},\mathbf{s}}$ are
%colored by the classes of the form $\mathbb{Z}+s_{c}(\mathrm{mod}\kappa
%^{-1}\mathbb{Z})$.\ We get for the $\boldsymbol{s}$-content of the node
%$\gamma=(a,b,c)$%
%\begin{equation}
%c(\gamma)=b-a+\frac{rs_{c}}{r}=b-a+\frac{rs_{c}(\mathrm{mod}e)}{r}%
%.\label{ChangeColor}%
%\end{equation}
%This implies that the set of possible residues modulo $\kappa^{-1}\mathbb{Z}$
%of the nodes we consider are in bijection with the set $I_{s}=\{0,\frac{1}%
%{r},\cdots,\frac{e-1}{r}\}$. So we can label the arrows of $\mathcal{G}%
%_{e,\mathbf{m},\mathbf{s}}$ by the colors of $I_{s}$. This does not properly
%yield a Kashiwara crystal but we shall need this generalisation in Section
%\ref{Sec_WCB}.

\section{Description of the canonical crystal isomorphisms}

In this section we assume that $s=(\kappa,\boldsymbol{s})$ with $\kappa =%
\frac{1}{e}$ and $\boldsymbol{s}\in\mathbb{Z}^{l}$. Then $\mathcal{G}_{e,%
\mathbf{m},\mathbf{s}}$ is a Kashiwara crystal for any $\mathbf{m}\notin%
\mathfrak{M}^{\mathbf{s},e}$.

\subsection{Crystal isomorphisms}

The hyperplanes $\mathfrak{m}_{i,j,N}^{\mathbf{s},e}$ divide $\mathbb{R}^{l}$
into chambers. We first show that the orders $\preceq_{\mathbf{m}}$ are the
same for all the parameters $\mathbf{m}$ in the same (open) chambers. We
also show that one can restrict to a finite sets of chambers in order to
understand our crystal isomorphisms.

\begin{proposition}
\label{Prop_chamber}Assume that $\mathbf{m}_{1}$ and $\mathbf{m}_{2}$ are
both in the same chamber with respect to the decomposition in \S \ref{act2},
then the orders $\preceq_{\mathbf{m}_{1}}$ and $\preceq_{\mathbf{m}_{2}}$ on
the $i$-nodes of an arbitrary $l$-partition coincide.
\end{proposition}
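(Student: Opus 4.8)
The proposition states that if two parameter vectors $\mathbf{m}_1, \mathbf{m}_2$ lie in the same open chamber of $\mathbb{R}^l$ cut out by the hyperplanes $\mathfrak{m}_{i,j,N}^{\mathbf{s},e}$, then the total orders $\preceq_{\mathbf{m}_1}$ and $\preceq_{\mathbf{m}_2}$ on $i$-nodes coincide.

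The order is defined by: $\gamma \preceq_{\mathbf{m}} \gamma'$ iff $b - a + m_c < b' - a' + m_{c'}$.

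The hyperplanes are $\{s_i - m_i - (s_j - m_j) = Ne\}$, equivalently $m_i - m_j = s_i - s_j - Ne$.

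**Key insight:**

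Two $i$-nodes $\gamma = (a,b,c)$ and $\gamma' = (a',b',c')$ being $i$-nodes means they have the same residue. The comparison depends on the sign of
$$(b-a+m_c) - (b'-a'+m_{c'}) = (b-a) - (b'-a') + (m_c - m_{c'}).$$

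When $c = c'$, this equals $(b-a)-(b'-a')$, an integer independent of $\mathbf{m}$, so the order never changes (it can't be zero for distinct nodes with same residue in same component — actually it's just a fixed integer). When $c \neq c'$, the sign depends on $m_c - m_{c'}$, and I must show this sign is constant across the chamber.

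**My proof plan:**

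The strategy is to show that the sign of each comparison quantity is determined by which chamber $\mathbf{m}$ lies in. The crucial arithmetic fact is that since $\gamma, \gamma'$ are $i$-nodes (same residue), their contents $b-a+s_c$ and $b'-a'+s_{c'}$ are congruent modulo $e$, i.e., $(b-a) - (b'-a') + (s_c - s_{c'}) \in e\mathbb{Z}$. Writing $(b-a)-(b'-a') = (s_{c'} - s_c) + Ne$ for some integer $N$, I substitute to get
$$(b-a+m_c)-(b'-a'+m_{c'}) = (s_{c'}-s_c) + Ne + (m_c - m_{c'}) = -\bigl[(s_c - m_c)-(s_{c'}-m_{c'})\bigr] + Ne.$$
This vanishes precisely when $\mathbf{m}$ lies on the hyperplane $\mathfrak{m}_{c,c',N}^{\mathbf{s},e}$.

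\begin{proof}[Proof sketch]
Let $\gamma=(a,b,c)$ and $\gamma'=(a',b',c')$ be two distinct $i$-nodes. Since they share the residue $i$, their contents agree modulo $e$, so $(b-a)-(b'-a')+(s_{c}-s_{c'})=Ne$ for a unique $N\in\mathbb{Z}$. The quantity governing the order satisfies
\begin{equation*}
(b-a+m_{c})-(b'-a'+m_{c'})=Ne-\bigl[(s_{c}-m_{c})-(s_{c'}-m_{c'})\bigr],
\end{equation*}
which vanishes exactly when $\mathbf{m}\in\mathfrak{m}_{c,c',N}^{\mathbf{s},e}$. As $\mathbf{m}$ ranges over an open chamber, it never lies on this hyperplane, so the sign of the above expression is constant: the map $\mathbf{m}\mapsto(s_{c}-m_{c})-(s_{c'}-m_{c'})$ is continuous and avoids the value $Ne$, hence stays on one side of it throughout the (connected) chamber. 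Therefore the comparison between $\gamma$ and $\gamma'$ yields the same result for $\mathbf{m}_{1}$ and $\mathbf{m}_{2}$. As this holds for every pair of $i$-nodes of every $l$-partition, the orders $\preceq_{\mathbf{m}_{1}}$ and $\preceq_{\mathbf{m}_{2}}$ coincide.
\end{proof}

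**Main obstacle:** The only genuine point requiring care is verifying that the hyperplanes appearing in the order comparisons (indexed by the component pair $(c,c')$ and the integer $N$ forced by the same-residue condition) are exactly the defining hyperplanes $\mathfrak{m}_{i,j,N}^{\mathbf{s},e}$ of the chamber decomposition — so that "same chamber" is precisely the condition controlling all comparisons. This is handled by the residue congruence, which pins down $N$ and links the sign change of each comparison to crossing a specific wall. The connectedness of open chambers then upgrades "sign never zero" to "sign constant."
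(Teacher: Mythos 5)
Your proof is correct and follows essentially the same route as the paper's: both use the residue congruence to pin down the integer $N$ with $(b-a+s_c)-(b'-a'+s_{c'})=Ne$, and both observe that the sign of $(b-a+m_c)-(b'-a'+m_{c'})$ is governed by which side of the wall $\mathfrak{m}_{c,c',N}^{\mathbf{s},e}$ the parameter $\mathbf{m}$ lies on. The only cosmetic difference is that the paper argues by contradiction (a reversal of the order forces $\mathbf{m}_1,\mathbf{m}_2$ to be separated by that wall) while you argue directly via continuity and connectedness of the open chamber.
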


\begin{proof}
Consider $\gamma=(a,b,c)$ and $\gamma^{\prime}=(a^{\prime},b^{\prime
},c^{\prime})$ two distinct $i$-nodes and assume we have $\gamma \prec_{%
\mathbf{m}_{1}}\gamma^{\prime}$ but $\gamma^{\prime}\prec _{\mathbf{m}%
_{2}}\gamma$. This means that: 
\begin{multline}
b-a+m_{1,c}<b^{\prime}-a^{\prime}+m_{1,c^{\prime}},\ b-a+m_{2,c}>b^{\prime
}-a^{\prime}+m_{2,c^{\prime}}  \notag \\
\text{and }b-a+s_{c}=b^{\prime}-a^{\prime}+s_{c^{\prime}}+ke\text{ with }k\in%
\mathbb{Z}.   \label{rela}
\end{multline}
We get $b^{\prime}-a^{\prime}=b-a+s_{c}-s_{c^{\prime}}-ke$. By replacing $%
b^{\prime}-a^{\prime}$ by its expression in terms of $a$ and $b$ in the
first above inequality we obtain: 
\begin{equation}
(s_{c}-m_{1,c})-(s_{c^{\prime}}-m_{1,c^{\prime}})>ke,   \label{supke}
\end{equation}
whereas the second inequality yields: 
\begin{equation}
(s_{c}-m_{2,c})-(s_{c^{\prime}}-m_{2,c^{\prime}})<ke.   \label{infke}
\end{equation}
This means that $\mathbf{m}_{1}$ and $\mathbf{m}_{2}$ are separated by the
affine hyperplane with equation $(s_{c}-m_{c})-(s_{c^{\prime}}-m_{c^{%
\prime}})=ke$, so we get the desired contradiction.
\end{proof}

\begin{proposition}
\label{asy} Consider a wall $\mathfrak{m}_{i,j,N}^{\mathbf{s},e}$ such that: 
\begin{equation*}
|N.e+(s_{j}-s_{i})|>n, 
\end{equation*}
and pick two parameters $\mathbf{m}_{1}$ and $\mathbf{m}_{2}$ separated by
this wall (and only it) then the order $\preceq_{\mathbf{m}_{1}}$ and $%
\preceq_{\mathbf{m}_{2}}$ are the same on $\Pi^{l}(n)$.
\end{proposition}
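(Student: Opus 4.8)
The plan is to reduce the statement to the computation already made in the proof of Proposition~\ref{Prop_chamber} and then to control the contents of the relevant nodes by the rank $n$. The orders $\preceq_{\mathbf{m}_1}$ and $\preceq_{\mathbf{m}_2}$ coincide on $\Pi^{l}(n)$ precisely when, for every $\boldsymbol{\lambda}\vdash_{l}n$ and every $i\in I_{s}$, they agree on the set of addable and removable $i$-nodes of $\boldsymbol{\lambda}$; so it suffices to show that no pair of such nodes can have its order reversed between $\mathbf{m}_1$ and $\mathbf{m}_2$.

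First I would recall from the proof of Proposition~\ref{Prop_chamber} that the order on two distinct $i$-nodes $\gamma=(a,b,c)$ and $\gamma'=(a',b',c')$ can change between $\mathbf{m}_1$ and $\mathbf{m}_2$ only if these parameters lie on opposite sides of the hyperplane $(s_c-m_c)-(s_{c'}-m_{c'})=ke$, where $k$ is forced by the equal-residue condition $b-a+s_c=b'-a'+s_{c'}+ke$. This hyperplane is exactly $\mathfrak{m}_{c,c',k}^{\mathbf{s},e}$, a member of the arrangement $\mathfrak{M}^{\mathbf{s},e}$. Since by hypothesis $\mathbf{m}_1$ and $\mathbf{m}_2$ are separated by the single wall $\mathfrak{m}_{i,j,N}^{\mathbf{s},e}$, any flip forces $\mathfrak{m}_{c,c',k}^{\mathbf{s},e}$ to coincide, as a hyperplane of $\mathbb{R}^{l}$, with $\mathfrak{m}_{i,j,N}^{\mathbf{s},e}$. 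If $c=c'$ the comparison $b-a+m_c<b'-a'+m_c$ is independent of $\mathbf{m}$ and no flip occurs; otherwise comparing the linear parts in the variable $\mathbf{m}$ forces $\{c,c'\}=\{i,j\}$, and matching the constants gives $k=N$ when $(c,c')=(i,j)$ and $k=-N$ when $(c,c')=(j,i)$.

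In either case the residue condition rewrites as $(b-a)-(b'-a')=\pm\bigl(Ne+(s_j-s_i)\bigr)$, so a flip would require an addable or removable $i$-node in component $i$ and another in component $j$ whose contents differ by exactly $\pm(Ne+(s_j-s_i))$. The decisive input is then a size estimate: writing $n_c=|\lambda^{c}|$, every addable or removable node $(a,b,c)$ of $\boldsymbol{\lambda}$ satisfies $|b-a|\le n_c$, the extreme values $\pm n_c$ being attained on a single row $(n_c)$ or a single column $(1^{n_c})$. Hence
\begin{equation*}
\bigl|(b-a)-(b'-a')\bigr|\le n_i+n_j\le \sum_{c=1}^{l}n_c=n,
\end{equation*}
which contradicts the hypothesis $|Ne+(s_j-s_i)|>n$. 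Therefore no such pair of nodes exists, no order can flip, and $\preceq_{\mathbf{m}_1}$ and $\preceq_{\mathbf{m}_2}$ agree on $\Pi^{l}(n)$. The content bound is routine (one checks it on the two extremal shapes); I expect the only point requiring care to be the reduction step, namely using Proposition~\ref{Prop_chamber} to pin the relevant content difference to $Ne+(s_j-s_i)$, and then invoking the \emph{sharp} inequality $n_i+n_j\le n$ coming from the disjointness of the components rather than the crude bound $2n$.
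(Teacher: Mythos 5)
Your argument is correct and follows essentially the same route as the paper's proof: reduce via the computation in Proposition~\ref{Prop_chamber} to the case $\{c,c'\}=\{i,j\}$, $k=\pm N$, and then derive a contradiction from the bound $|(b-a)-(b'-a')|\leq n$ against the hypothesis $|Ne+(s_j-s_i)|>n$. The only difference is that you spell out the details the paper leaves implicit (the $c=c'$ case, the matching of the hyperplanes, and the justification of the content bound via $n_i+n_j\leq n$), all of which check out.
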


\begin{proof}
Consider $\gamma=(a,b,c)$ and $\gamma^{\prime}=(a^{\prime},b^{\prime
},c^{\prime})$ two distinct $i$-nodes such that $\gamma\prec_{\mathbf{m}%
_{1}}\gamma^{\prime}$ but $\gamma^{\prime}\prec_{\mathbf{m}_{2}}\gamma$.
Since we know that $\mathbf{m}_{1}$ and $\mathbf{m}_{2}$ are separated by
the unique wall $\mathfrak{m}_{i,j,N}^{\mathbf{s},e}$. One can assume that $%
c=i$ and $c^{\prime}=j$ and $k=N$. We obtain: 
\begin{equation*}
b-a+s_{i}=b^{\prime}-a^{\prime}+s_{j}+N.e, 
\end{equation*}
but as we have 
\begin{equation*}
|(b-a)-(b^{\prime}-a^{\prime})|\leq n, 
\end{equation*}
this leads to a contradiction.
\end{proof}

\bigskip

The above propositions shows that if we choose $\mathbf{m}_{1}$ and $\mathbf{%
m}_{2}$ such that :

\begin{itemize}
\item these parameters belong to the same chamber with respect to the
decomposition in \S \ref{act2},

\item or satisfy the condition of Proposition \ref{asy},
\end{itemize}

then the associated crystal structures are not simply isomorphic but equal.
To investigate our canonical isomorphims, we thus have a finite set $%
\mathfrak{M}_{n}^{\mathbf{s},e}$ of hyperplanes and chambers to consider
(depending on $n$), namely $\mathfrak{M}_{n}^{\mathbf{s},e}$ is the union of
the $\mathfrak{m}_{i,j,N}^{\mathbf{s},e}$ for $1\leq i<j\leq l$ and $%
\left\vert N.e+(s_{j}-s_{i})\right\vert \leq n$. Indeed, the canonical
isomorphisms we aim to characterize will be equal to the identity if we stay
inside one chamber.\ So it just remains to understand what happens when we
move from one chamber to another.\ This is equivalent to describe the
crystal isomorphisms corresponding to the crossings of the walls $\mathfrak{m%
}_{i,j,N}^{\mathbf{s},e}$ with $1\leq i<j\leq l$ and $\left\vert
N.e+(s_{j}-s_{i})\right\vert \leq n$.

\subsection{A combinatorial procedure}

\label{comb}

We first study the case of $e=\infty$, define and describe our canonical
crystal isomorphisms. The walls are then defined as: 
\begin{equation*}
\mathfrak{m}_{i,j}^{\mathbf{s},\infty}:=\{(m_{1},\ldots,m_{l})\in \mathbb{Q}%
^{l}|s_{i}-m_{i}-(s_{j}-m_{j})=0\}, 
\end{equation*}
for each $1\leq i,j\leq l$. Let us first consider the case where $l=2$. Set $%
\mathbf{s}=(s_{1},s_{2})$. We wish to describe a crystal isomorphism when we
cross a wall: 
\begin{equation*}
\mathfrak{m}_{1,2}^{(s_{1},s_{2}),\infty}:=\{(m_{1},m_{2})\in\mathbb{Q}%
^{2}|s_{1}-m_{1}-(s_{2}-m_{2})=0\}. 
\end{equation*}

Let $(\lambda^{1},\lambda^{2})$ be a bipartition of $n$ and define $%
d\geq\left\vert s_{1}-s_{2}\right\vert $ minimal such that $\lambda
_{d+1-\left\vert s_{1}-s_{2}\right\vert }^{1}=\lambda_{d+1-\left\vert
s_{1}-s_{2}\right\vert }^{2}=0$. To $(\lambda^{1},\lambda^{2})$, we
associate its symbol. This is the two row tableau:{\small 
\begin{align*}
S(\lambda^{1},\lambda^{2}) & =%
\begin{tabular}{|l|l|l|lll}
\hline
$s_{2}-d+\lambda_{d+1}^{2}$ & $\cdots$ & $\cdots$ & $\cdots$ & 
\multicolumn{1}{|l}{$s_{2}-1+\lambda_{2}^{2}$} & \multicolumn{1}{|l|}{$%
s_{2}+\lambda_{1}^{2}$} \\ \hline
$s_{2}-d+\lambda_{d+1+s_{1}-s_{2}}^{1}$ & $\cdots$ & $s_{1}+\lambda_{1}^{1}$
&  &  &  \\ \cline{1-3}
\end{tabular}
\\
\text{when }s_{2} & \geq s_{1} \\
S(\lambda^{1},\lambda^{2}) & =%
\begin{tabular}{|l|l|l|lll}
\cline{1-3}
$s_{1}-d+\lambda_{d+1+s_{2}-s_{1}}^{2}$ & $\cdots$ & $s_{2}+\lambda_{1}^{2}$
&  &  &  \\ \hline
$s_{1}-d+\lambda_{d+1}^{1}$ & $\cdots$ & $\cdots$ & $\cdots$ & 
\multicolumn{1}{|l}{$s_{1}-1+\lambda_{2}^{1}$} & \multicolumn{1}{|l|}{$%
s_{1}+\lambda_{1}^{1}$} \\ \hline
\end{tabular}
\\
\text{when }s_{2} & <s_{1}.
\end{align*}
} We will write $S(\lambda^{1},\lambda^{2})=\binom{L_{2}}{L_{1}}$. By
definition of $d$, the entry in the leftmost column of $S(\lambda^{1},%
\lambda^{2})$ is equal to $s_{2}-d$ (resp. $s_{1}-d$) when $s_{2}\geq s_{1}$
(resp. $s_{2}<s_{1}$). So from $S(\lambda^{1},\lambda^{2})$ it is easy to
obtain $s_{1}$ and $s_{2}$ since $\left\vert s_{1}-s_{2}\right\vert $ is the
difference between the lengths of $L_{1}$ and $L_{2}$. Once we have $%
S(\lambda^{1},\lambda^{2})$ and $(s_{1},s_{2})$, we can recover the
bipartition $(\lambda^{1},\lambda^{2})$. We now define a new bipartition $(%
\widetilde{\lambda}^{1},\widetilde{\lambda}^{2})$ from its symbol $(\binom{%
\widetilde{L}_{2}}{\widetilde{L}_{1}}$ as follows.

\noindent Suppose first $s_{2}\geq s_{1}.\;$Consider $x_{1}=\min\{t\in
L_{1}\}.\;$We associate to $x_{1}$ the integer $y_{1}\in L_{2}$ such that 
\begin{equation}
y_{1}=\left\{ 
\begin{array}{l}
\max\{z\in L_{2}\mid z\leq x_{1}\}\text{ if }\min\{z\in L_{2}\}\leq x_{1},
\\ 
\max\{z\in L_{2}\}\text{ otherwise.}%
\end{array}
\right.   \label{algo1}
\end{equation}
We repeat the same procedure to the lines $L_{2}-\{y_{1}\}$ and $%
L_{1}-\{x_{1}\}.$ By induction this yields a sequence $%
\{y_{1},...,y_{d+1+s_{1}-s_{2}}\}\subset L_{2}.\;$Then we define $\widetilde{%
L}_{1}$ as the line obtained by reordering the integers of $%
\{y_{1},...,y_{d+1+s_{2}-s_{1}}\}$ and $\widetilde{L}_{2}$ as the line
obtained by reordering the integers of $L_{2}-%
\{y_{1},...,y_{d+1+s_{1}-s_{2}}\}+L_{1}$ (i.e. by reordering the set
obtained by replacing in $L_{2}$ the entries $y_{1},...,y_{d+1+s_{1}-s_{2}}$
by those of $L_{1}$).

\noindent Now, suppose $s_{2}<s_{1}.\;$Consider $x_{1}=\min\{t\in L_{2}\}.\;$%
We associate to $x_{1}$ the integer $y_{1}\in L_{1}$ such that 
\begin{equation}
y_{1}=\left\{ 
\begin{array}{l}
\min\{z\in L_{1}\mid x_{1}\leq z\}\text{ if }\max\{z\in L_{1}\}\geq x_{1},
\\ 
\min\{z\in L_{1}\}\text{ otherwise.}%
\end{array}
\right.   \label{algo2}
\end{equation}
We repeat the same procedure to the lines $L_{1}-\{y_{1}\}$ and $%
L_{2}-\{x_{1}\}$ and obtain a sequence $\{y_{1},...,y_{d+1+s_{1}-s_{2}}\}%
\subset C_{1}.\;$Then we define $\widetilde{L}_{2}$ as the line obtained by
reordering the integers of $\{y_{1},...,y_{d+1+s_{2}-s_{1}}\}$ and $%
\widetilde{L}_{1}$ as the line obtained by reordering the integers of $%
L_{1}-\{y_{1},...,y_{d+1+s_{2}-s_{1}}\}+L_{2}.$

\begin{example}
\label{ex1} Assume $(s_{1},s_{2})=(0,3)$ and consider the bipartition of $38$
given by $(\lambda^{1},\lambda^{2})=(6.5.5.4,5.5.3.3.2)$. Then $d=7$ and:%
{\small 
\begin{align*}
S(\lambda^{1},\lambda^{2}) & =%
\begin{tabular}{|l|l|l|l|l|lll}
\hline
$-4+0$ & $-3+0$ & $-2+0$ & $-1+2$ & $0+3$ & $1+3$ & \multicolumn{1}{|l}{$2+5$%
} & \multicolumn{1}{|l|}{$3+5$} \\ \hline
$-4+0$ & $-3+4$ & $-2+5$ & $-1+5$ & $0+6$ &  &  &  \\ \cline{1-5}
\end{tabular}
\\
S(\lambda^{1},\lambda^{2}) & =%
\begin{tabular}{|c|c|c|c|c|ccc}
\hline
$-4$ & $-3$ & $-2$ & $1$ & $3$ & $4$ & \multicolumn{1}{|c}{$7$} & 
\multicolumn{1}{|c|}{$8$} \\ \hline
$-4$ & $1$ & $3$ & $4$ & $6$ &  &  &  \\ \cline{1-5}
\end{tabular}%
\end{align*}
} We get $\{y_{1},...,y_{5}\}=\{-4,1,3,4,-2\}$. This gives {\small 
\begin{equation*}
S(\widetilde{\lambda}^{1},\widetilde{\lambda}^{2})=%
\begin{tabular}{|c|c|c|c|c|ccc}
\hline
$-4$ & $-3$ & $1$ & $3$ & $4$ & $6$ & \multicolumn{1}{|c}{$7$} & 
\multicolumn{1}{|c|}{$8$} \\ \hline
$-4$ & $-2$ & $1$ & $3$ & $4$ &  &  &  \\ \cline{1-5}
\end{tabular}
\end{equation*}
} and finally $(\widetilde{\lambda}^{1},\widetilde{\lambda}%
^{2})=(4.4.3.1,5.5.5.4.4.3)$. Observe that both $(\lambda^{1},\lambda^{2})$
and $(\widetilde{\lambda}^{1},\widetilde{\lambda}^{2})$ have rank equal to $%
38$.
\end{example}

\begin{remark}
The previous combinatorial procedure is very closed from that used in \cite%
{JL} corresponding to the combinatorial $R$-matrix 
\begin{equation*}
\mathcal{G}_{\infty,(s_{1},s_{2})}=\mathcal{G}_{\infty,s_{1}}\otimes 
\mathcal{G}_{\infty,s_{2}}\simeq\mathcal{G}_{\infty,s_{2}}\otimes \mathcal{G}%
_{\infty,s_{1}}=\mathcal{G}_{\infty,(s_{2},s_{1})}. 
\end{equation*}
The only difference is that we do not modify here the original multicharge $%
(s_{1},s_{2})$. This means the above $R$-matrix is the map 
\begin{equation*}
\begin{array}{rccl}
R_{(s_{1},s_{2})}^{\infty} & :\Pi^{2}(n) & \rightarrow & \Pi^{2}(n) \\ 
& (\lambda^{1},\lambda^{2}) & \mapsto & (\widetilde{\lambda}^{2},\widetilde{%
\lambda}^{1}).%
\end{array}
\end{equation*}
We proved in \cite{JL} it is a rank preserving crystal isomorphism. Also $%
R_{(s_{1},s_{2})}^{\infty}$ is the unique $\mathfrak{g}_{\infty}$-crystal
isomorphism between the two Fock spaces because there is no multiplicity
into their decomposition in irreducible components (only in level $2$). We
also have $R_{(s_{1},s_{2})}^{\infty}\circ
R_{(s_{2},s_{1})}^{\infty}=R_{(s_{2},s_{1})}^{\infty}\circ
R_{(s_{1},s_{2})}^{\infty}=id$ and we can write 
\begin{equation*}
(\widetilde{\lambda}^{1},\widetilde{\lambda}^{2})=F\circ
R_{(s_{1},s_{2})}^{\infty}(\lambda^{1},\lambda^{2}) 
\end{equation*}
where $F$ is the flip involution defined on the set of bipartitions by $%
F(\mu^{1},\mu^{2})=(\mu^{2},\mu^{1})$.
\end{remark}

By using the crystal isomorphism introduced in (\ref{iso_perm}) and the
previous remark, we get the following proposition.

\begin{proposition}
\label{uni} Assume that $l=2$ and $e=\infty$. Consider $\mathbf{m}^{+}$ and $%
\mathbf{m}^{-}$ separated by the wall $\mathfrak{m}_{1,2}^{(s_{1},s_{2}),%
\infty}$ such that $m^{+}=(m_{1}^{+},m_{2}^{+})$ with $%
m_{1}^{+}-s_{1}-(m_{2}^{+}-s_{2})>0$ and $m^{-}=(m_{1}^{-},m_{2}^{-})$ with $%
m_{1}^{-}-s_{1}-(m_{2}^{-}-s_{2})<0.$

\begin{enumerate}
\item The map: 
\begin{equation*}
\begin{array}{rccl}
\Phi_{(s_{1},s_{2})}^{\infty} & :\Pi^{2}(n) & \rightarrow & \Pi^{2}(n) \\ 
& (\lambda^{1},\lambda^{2}) & \mapsto & (\widetilde{\lambda}^{1},\widetilde{%
\lambda}^{2}),%
\end{array}
\end{equation*}
defines a $\mathfrak{g}_{\infty}$-crystal isomorphism from $\mathcal{G}%
_{\infty,\mathbf{m}^{+},\mathbf{s}}$ to $\mathcal{G}_{\infty,\mathbf{m}^{-},%
\mathbf{s}}$.

\item We have $\Phi_{(s_{1},s_{2})}^{\infty}=F\circ
R_{(s_{1},s_{2})}^{\infty }$ and $(\Phi_{(s_{1},s_{2})}^{%
\infty})^{-1}=R_{(s_{2},s_{1})}^{\infty}\circ F$ is a $\mathfrak{g}_{\infty}$%
-crystal isomorphism from $\mathcal{G}_{\infty,\mathbf{m}^{-},\mathbf{s}}$
to $\mathcal{G}_{\infty,\mathbf{m}^{+},\mathbf{s}}$.

\item This map $\Phi _{(s_{1},s_{2})}^{\infty }$ is the unique crystal
isomorphism between $\mathcal{G}_{\infty ,\mathbf{m}^{+},\mathbf{s}}$ and $%
\mathcal{G}_{\infty ,\mathbf{m}^{-},\mathbf{s}}$ which preserves the rank of
the bipartitions.\footnote{%
More generally, this also shows there is only one graph isomorphism from $%
\mathcal{G}_{\infty ,\mathbf{m}^{+},\mathbf{s}}$ to $\mathcal{G}_{\infty ,%
\mathbf{m}^{-},\mathbf{s}}$ preserving both the labelling of the arrows and
the rank of the bipartitions. }
\end{enumerate}
\end{proposition}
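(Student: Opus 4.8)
The plan is to reduce the whole statement to the combinatorial $R$-matrix $R^\infty_{(s_1,s_2)}$ and the properties recalled in the remark above, by identifying each of the two crystals $\mathcal{G}_{\infty,\mathbf{m}^+,\mathbf{s}}$ and $\mathcal{G}_{\infty,\mathbf{m}^-,\mathbf{s}}$ with a JMMO crystal. The first observation is that for $l=2$ and $e=\infty$ there is a single wall $\mathfrak{m}_{1,2}^{(s_1,s_2),\infty}$, whose equation $s_1-m_1-(s_2-m_2)=0$ cuts $\mathbb{R}^2$ into exactly two chambers, separated by the sign of $m_1-s_1-(m_2-s_2)$. By Proposition \ref{Prop_chamber} the crystal $\mathcal{G}_{\infty,\mathbf{m},\mathbf{s}}$ depends only on the chamber containing $\mathbf{m}$, so I may replace $\mathbf{m}^+$ and $\mathbf{m}^-$ by convenient representatives of their respective chambers, namely $\mathbf{m}^+=(s_1+1,s_2)$ and $\mathbf{m}^-=(s_1,s_2+1)$ (which have $m_1-s_1-(m_2-s_2)$ equal to $+1$ and $-1$).

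For $\mathbf{m}^+$ the shifts $\delta_c=m_c^+-s_c$ satisfy $\delta_1=1>\delta_2=0\ge 0$, which is exactly the JMMO normalisation; hence $\mathcal{G}_{\infty,\mathbf{m}^+,\mathbf{s}}$ equals the JMMO crystal $\mathcal{G}_{\infty,(s_1,s_2)}$. For $\mathbf{m}^-$ one has $\delta_1=0<\delta_2=1$, so the permutation isomorphism (\ref{iso_perm}) with $\sigma=(1\,2)$ yields a crystal isomorphism $\mathcal{G}_{\infty,\mathbf{m}^-,\mathbf{s}}\to\mathcal{G}_{\infty,(s_2,s_1)}$ which is the flip $F$ of the two components. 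Since $R^\infty_{(s_1,s_2)}\colon\mathcal{G}_{\infty,(s_1,s_2)}\to\mathcal{G}_{\infty,(s_2,s_1)}$ is a rank-preserving crystal isomorphism by \cite{JL} (as recalled above), and since $F$ is an involution, the composite
\[
\mathcal{G}_{\infty,\mathbf{m}^+,\mathbf{s}}=\mathcal{G}_{\infty,(s_1,s_2)}\xrightarrow{\,R^\infty_{(s_1,s_2)}\,}\mathcal{G}_{\infty,(s_2,s_1)}\xrightarrow{\,F\,}\mathcal{G}_{\infty,\mathbf{m}^-,\mathbf{s}}
\]
is a crystal isomorphism $\mathcal{G}_{\infty,\mathbf{m}^+,\mathbf{s}}\to\mathcal{G}_{\infty,\mathbf{m}^-,\mathbf{s}}$. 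On a bipartition it sends $(\lambda^1,\lambda^2)$ to $F\circ R^\infty_{(s_1,s_2)}(\lambda^1,\lambda^2)=(\widetilde\lambda^1,\widetilde\lambda^2)$, which is precisely $\Phi^\infty_{(s_1,s_2)}$. This establishes (1) together with the identity $\Phi^\infty_{(s_1,s_2)}=F\circ R^\infty_{(s_1,s_2)}$ of (2); the inverse formula in (2) follows formally from $R^\infty_{(s_1,s_2)}\circ R^\infty_{(s_2,s_1)}=\mathrm{id}$ and $F^2=\mathrm{id}$.

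For the uniqueness in (3), let $\Theta\colon\mathcal{G}_{\infty,\mathbf{m}^+,\mathbf{s}}\to\mathcal{G}_{\infty,\mathbf{m}^-,\mathbf{s}}$ be any rank-preserving crystal isomorphism. Transporting it through the two identifications above, $F\circ\Theta$ is a rank-preserving crystal isomorphism $\mathcal{G}_{\infty,(s_1,s_2)}\to\mathcal{G}_{\infty,(s_2,s_1)}$, because the identification of $\mathbf{m}^+$ with the JMMO crystal is the identity and $F$ preserves the rank. As recalled in the remark above, $R^\infty_{(s_1,s_2)}$ is the unique crystal isomorphism between these two level-$2$ Fock spaces, their decomposition into irreducible $\mathfrak{g}_\infty$-components being multiplicity-free. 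Hence $F\circ\Theta=R^\infty_{(s_1,s_2)}$, so that $\Theta=F\circ R^\infty_{(s_1,s_2)}=\Phi^\infty_{(s_1,s_2)}$, as claimed.

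The main obstacle is the second step: making precise, when $e=\infty$, that each of the two chambers genuinely produces a JMMO crystal and that the normalisation $\delta_1>\delta_2\ge 0$ can be realised inside the chamber. This is exactly what forces the preliminary reduction via Proposition \ref{Prop_chamber} to the explicit representatives $(s_1+1,s_2)$ and $(s_1,s_2+1)$, after which (\ref{iso_perm}) applies verbatim. Everything else is a formal composition of crystal isomorphisms, and the multiplicity-freeness underlying (3) is quoted from \cite{JL}.
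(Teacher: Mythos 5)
Your argument is correct and is essentially the paper's own (only sketched) proof: the paper derives the proposition precisely by combining the permutation isomorphism (\ref{iso_perm}) with the properties of $R^{\infty}_{(s_1,s_2)}$ recalled in the preceding remark (rank preservation and uniqueness via the multiplicity-free decomposition). Your only addition is to make the chamber representatives $(s_1+1,s_2)$ and $(s_1,s_2+1)$ explicit via Proposition \ref{Prop_chamber}, which is a faithful elaboration of the same route.
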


Assume that $l\in\mathbb{N}$. To construct a crystal isomorphism between two
Fock spaces $\mathcal{G}_{\infty,\mathbf{m},\mathbf{s}}$ and $\mathcal{G}%
_{\infty,\mathbf{m}^{\prime},\mathbf{s}}$ where $\mathbf{m}$ and $\mathbf{m}%
^{\prime}$ are separated by the wall $\mathfrak{m}:=\mathfrak{m}_{i,j}^{%
\mathbf{s},\infty}$ (and only by this wall), we define the application: 
\begin{equation*}
\begin{array}{rccl}
\Phi_{\mathfrak{m}}^{\mathbf{s},\infty} & :\Pi^{l}(n) & \rightarrow & \Pi
^{l}(n) \\ 
& (\lambda^{1},\ldots,\lambda^{l}) & \mapsto & (\mu^{1},\ldots,\mu^{l}),%
\end{array}
\end{equation*}
such that $\mu^{k}=\lambda^{k}$ if $k\neq i,j$ and 
\begin{equation*}
(\mu^{i},\mu^{j})=\left\{ 
\begin{array}{rcl}
\Phi^{{(s_{i},s_{j}),\infty}}(\lambda^{i},\lambda^{j}) & \text{ if } & 
m_{i}-s_{i}-(m_{j}-s_{j})>0, \\ 
(\Phi^{{(s_{i},s_{j}),\infty}})^{-1}(\lambda^{i},\lambda^{j}) & \text{
otherwise.} & 
\end{array}
\right. 
\end{equation*}

\begin{proposition}
\label{Prop_isoFI}$\Phi_{\mathfrak{m}}^{\mathbf{s},\infty}$ defines a $%
\mathfrak{g}_{\infty}$-isomorphism of crystal between $\mathcal{G}_{\infty,%
\mathbf{m},\mathbf{s}}$ and $\mathcal{G}_{\infty,\mathbf{m}^{\prime },%
\mathbf{s}}$.
\end{proposition}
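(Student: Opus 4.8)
The plan is to reduce the statement to the level-two case already settled in Proposition \ref{uni}. The key observation is that, when $e=\infty$, the order $\preceq_{\mathbf{m}}$ on the $z$-nodes of an $l$-partition depends only on the relative order of the quantities $m_c-s_c$ attached to the components $c=1,\ldots,l$. Indeed, if $\gamma=(a,b,c)$ and $\gamma'=(a',b',c')$ are two distinct $z$-nodes then $b-a+s_c=b'-a'+s_{c'}=z$, so that $b-a+m_c=z+(m_c-s_c)$ and $b'-a'+m_{c'}=z+(m_{c'}-s_{c'})$; hence $\gamma\preceq_{\mathbf{m}}\gamma'$ if and only if $m_c-s_c<m_{c'}-s_{c'}$, and in particular $c\neq c'$. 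A short diagonal argument shows that each component contributes at most one addable or removable $z$-node, and the displayed comparison shows that these nodes are interleaved across components exactly according to the linear order $c\mapsto m_c-s_c$. Consequently $\mathcal{G}_{\infty,\mathbf{m},\mathbf{s}}$ is the crystal of the tensor product of the level-one Fock space crystals $\mathcal{G}_{\infty,s_c}$ taken in the order prescribed by $m_c-s_c$, in the spirit of the constructions of \cite{uglov,Ger}.

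First I would check that separation by the single wall $\mathfrak{m}:=\mathfrak{m}_{i,j}^{\mathbf{s},\infty}$ forces $i$ and $j$ to be adjacent for this order. Writing $t_c=m_c-s_c$ and $t'_c=m'_c-s_c$, crossing $\mathfrak{m}$ means that the sign of $t_i-t_j$ is reversed, while crossing no other wall $\mathfrak{m}_{c,c'}^{\mathbf{s},\infty}$ means that the sign of $t_c-t_{c'}$ is preserved for every other pair. If some index $k$ satisfied $t_i<t_k<t_j$, then the preservation of the relative orders of the pairs $\{i,k\}$ and $\{k,j\}$ would force $t'_i<t'_k<t'_j$, contradicting $t'_i>t'_j$; so no such $k$ exists and $i,j$ are neighbours (recall that $\mathbf{m}\notin\mathfrak{M}^{\mathbf{s},\infty}$ makes all the $t_c$ distinct). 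Hence, up to the common spectator factors, passing from $\mathbf{m}$ to $\mathbf{m}'$ amounts to transposing the two adjacent tensor factors indexed by $i$ and $j$.

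I would then conclude by functoriality of the crystal tensor product. Writing the two crystals as $A\otimes\mathcal{G}_{\infty,s_i}\otimes\mathcal{G}_{\infty,s_j}\otimes B$ and $A\otimes\mathcal{G}_{\infty,s_j}\otimes\mathcal{G}_{\infty,s_i}\otimes B$, where $A$ and $B$ collect the unchanged factors, any crystal isomorphism $R$ on the two middle factors yields a crystal isomorphism $\mathrm{id}_A\otimes R\otimes\mathrm{id}_B$ of the full tensor products, because the signature rule computing $\widetilde{e}_z$ and $\widetilde{f}_z$ on a tensor product is compatible with isomorphisms applied to a consecutive block of factors. By Proposition \ref{uni}, the map $\Phi^{(s_i,s_j),\infty}$ (respectively its inverse) is exactly such a crystal isomorphism between the level-two crystals $\mathcal{G}_{\infty,\mathbf{m}^{+},(s_i,s_j)}$ and $\mathcal{G}_{\infty,\mathbf{m}^{-},(s_i,s_j)}$, and the sign of $m_i-s_i-(m_j-s_j)$ selects exactly the branch used in the definition of $\Phi_{\mathfrak{m}}^{\mathbf{s},\infty}$. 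Since $\Phi_{\mathfrak{m}}^{\mathbf{s},\infty}$ acts by $\Phi^{(s_i,s_j),\infty}$ (or its inverse) on the components $i,j$ and by the identity elsewhere, it coincides with $\mathrm{id}_A\otimes R\otimes\mathrm{id}_B$ and is therefore a $\mathfrak{g}_\infty$-crystal isomorphism from $\mathcal{G}_{\infty,\mathbf{m},\mathbf{s}}$ to $\mathcal{G}_{\infty,\mathbf{m}',\mathbf{s}}$.

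The main obstacle I anticipate is making the tensor product identification fully rigorous: one must verify that the reduced $z$-word attached to $\preceq_{\mathbf{m}}$ really does agree with the concatenated signature of the factors in the $m_c-s_c$ order, including the bookkeeping of weights and of the $\partial$-grading, so that the level-one Fock spaces are glued as genuine Kashiwara crystals and not merely as colored graphs in the sense of \S\ref{Giso}. Once this identification and the adjacency of $i,j$ are in place, the reduction to Proposition \ref{uni} is immediate, and the remaining verifications are the routine compatibility checks for the combinatorial $R$-matrix acting on a single adjacent pair of factors.
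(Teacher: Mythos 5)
Your proposal is correct and follows essentially the same route as the paper: both reduce to the level-two case of Proposition \ref{uni} by observing that the order $\preceq_{\mathbf{m}}$ is governed by the sorting of the quantities $m_c-s_c$, that crossing only the wall $\mathfrak{m}_{i,j}^{\mathbf{s},\infty}$ amounts to transposing the adjacent factors $i$ and $j$ in the corresponding JMMO tensor product, and that the combinatorial $R$-matrix applied to that adjacent pair (identity elsewhere) gives the isomorphism. Your explicit adjacency argument replaces the paper's perturbation step (choosing $\mathbf{m},\mathbf{m}'$ close to the wall so that the sorting permutations differ by the transposition $(i,j)$), and the tensor-product identification you flag as the remaining obstacle is exactly the isomorphism (\ref{iso_perm}) the paper invokes.
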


\begin{proof}
Set $\boldsymbol{\delta}=\mathbf{m}-\mathbf{s}$ and $\boldsymbol{\delta }%
^{\prime}=\mathbf{m}^{\prime}-\mathbf{s}$.\ By Proposition \ref{Prop_chamber}%
, since we can move in each chamber without changing the crystal structure
we can assume that $\boldsymbol{\delta}$ and $\boldsymbol{\delta}^{\prime}$
have distinct coordinates.\ We can also assume that $\boldsymbol{m}$ and $%
\boldsymbol{m}^{\prime}$ are very closed to each other but belong to
distinct half-spaces defined by the wall $\mathfrak{m}$. So the coordinates
of $\boldsymbol{\delta}-\boldsymbol{\delta}^{\prime}$ are small and we have $%
\delta_{i}-\delta_{j}>0$ and $\delta_{i}^{\prime}-\delta_{j}^{^{\prime}}<0$
(or $\delta_{i}-\delta_{j}<0$ and $\delta_{i}^{\prime}-\delta_{j}^{\prime}>0$%
).\ We will assume $\delta_{i}-\delta_{j}>0$ and $\delta_{i}^{\prime
}-\delta_{j}^{^{\prime}}<0$, the arguments being analogue when $\delta
_{i}-\delta_{j}<0$ and $\delta_{i}^{\prime}-\delta_{j}^{\prime}>0$.

Let $\sigma$ be the permutations of $\{1,\ldots,l\}$ corresponding to the
decreasing reordering of $\boldsymbol{\delta}$. One can then choose the
coordinates of $\boldsymbol{\delta}-\boldsymbol{\delta}^{\prime}$
sufficiently small so that the permutation of $\{1,\ldots,l\}$ yielding the
decreasing reordering of $\boldsymbol{\delta}^{\prime}$ becomes $%
\sigma^{\prime}=\sigma_{1}\circ(i,j)$. We then have $\Phi_{\mathfrak{m}}^{%
\mathbf{s},\infty }=(\sigma^{\prime})^{-1}\circ
R_{(s_{i},s_{j})}^{\infty}\circ\sigma$ where we also denote by $\sigma$ and $%
\sigma^{\prime}$ the crystal isomorphims of type (\ref{iso_perm}) mapping $%
\mathcal{G}_{\infty,\mathbf{m},\mathbf{s}}$ and $\mathcal{G}_{\infty,\mathbf{%
m}^{\prime},\mathbf{s}}$ on the JMMO structures $\mathcal{G}_{\infty,\sigma(%
\mathbf{s)}}$ and $\mathcal{G}_{\infty ,\sigma^{\prime}(\mathbf{s)}},$
respectively. Therefore, $\Phi_{\mathfrak{m}}^{\mathbf{s},\infty}$ is also a
crystal isomorphism between the Fock spaces $\mathcal{G}_{\infty,\mathbf{m},%
\mathbf{s}}$ and $\mathcal{G}_{\infty ,\mathbf{m}^{\prime},\mathbf{s}}$.
\end{proof}

\bigskip

\label{defbij} We now describe the canonical $\mathfrak{g}_{e}$-isomorphisms
when $e\in\mathbb{N}$ is finite. Consider a wall: 
\begin{equation*}
\mathfrak{m}_{i,j,N}^{\mathbf{s},e}:=\{(m_{1},\ldots,m_{l})\
|s_{i}-m_{i}-(s_{j}-m_{j})=N.e\}. 
\end{equation*}
Assume that $\mathbf{m}$ and $\mathbf{m}^{\prime}$ are separated by this
wall (and only it) and that: 
\begin{equation}
s_{i}-m_{i}-(s_{j}-m_{j})>N.e\text{ and }s_{i}-m_{i}^{\prime}-(s_{j}-m_{j}^{%
\prime})<N.e.   \label{sens}
\end{equation}
To define a $\mathfrak{g}_{e}$-crystal isomorphism between the crystal $%
\mathcal{G}_{e,\mathbf{m},\mathbf{s}}$ and $\mathcal{G}_{e,\mathbf{m}%
^{\prime},\mathbf{s}}$, let us set $\widetilde{\mathbf{s}}=(s_{1},\ldots
,s_{i}-N.e,\ldots,s_{l})$. Then an element $\widetilde{\mathbf{m}}$ is in
the wall $\mathfrak{m}_{i,j,N}^{\mathbf{s},e}$ if and only if $\widetilde {%
\mathbf{m}}$ is in the set 
\begin{equation*}
\mathfrak{m}^{\infty}:=\mathfrak{m}_{i,j}^{\infty,\widetilde{\mathbf{s}}%
}:=\{(m_{1},\ldots,m_{l})\ |(s_{i}-N.e)-m_{i}-(s_{j}-m_{j})=0\}, 
\end{equation*}
where $\widetilde{\mathbf{s}}=(s_{1},s_{2},\ldots,s_{i}-Ne,\ldots,s_{l})$.
The set $\mathfrak{m}_{i,j}^{\infty,\widetilde{\mathbf{s}}}$ is a wall for $%
\widetilde{\mathbf{s}}$ in the case where $e=\infty$ and we known that the
map $\Phi_{\mathfrak{m}^{\infty}}^{\widetilde{\mathbf{s}},\infty}$ of
Proposition \ref{Prop_isoFI} is a $\mathfrak{g}_{\infty}$-crystal
isomorphism between $\mathcal{G}_{\infty,\mathbf{m},\widetilde{\mathbf{s}}}$
and $\mathcal{F}_{\infty,\mathbf{m}^{\prime},\widetilde{\mathbf{s}}}$.

\begin{theorem}
\label{Th_FI}$\Phi_{\mathfrak{m}^{\infty}}^{\widetilde{\mathbf{s}},\infty}$
is a $\mathfrak{g}_{e}$-crystal isomorphism between $\mathcal{G}_{e,\mathbf{m%
},\mathbf{s}}$ and $\mathcal{G}_{e,\mathbf{m}^{\prime},\mathbf{s}}$.
\end{theorem}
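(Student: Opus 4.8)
The plan is to reduce the finite-$e$ statement to the $e=\infty$ case that has already been established in Proposition~\ref{Prop_isoFI}, exploiting the fact that the combinatorial order $\preceq_{\mathbf{m}}$ on $i$-nodes is insensitive to whether we compute residues modulo $e$ or modulo $\infty$, once we have shifted the multicharge appropriately. The key observation, already set up in the paragraph preceding the theorem, is that the finite wall $\mathfrak{m}_{i,j,N}^{\mathbf{s},e}$ coincides with the infinite wall $\mathfrak{m}_{i,j}^{\infty,\widetilde{\mathbf{s}}}$ for the shifted multicharge $\widetilde{\mathbf{s}}=(s_{1},\ldots,s_{i}-Ne,\ldots,s_{l})$. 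Since $s_{i}-Ne\equiv s_{i}\ (\mathrm{mod}\ e)$, Remark~\ref{reegalite} gives that the two residue assignments agree modulo $e$, hence $\mathcal{F}_{e,\mathbf{m},\widetilde{\mathbf{s}}}=\mathcal{F}_{e,\mathbf{m},\mathbf{s}}$ and likewise for $\mathbf{m}^{\prime}$. Thus the \emph{finite} crystals $\mathcal{G}_{e,\mathbf{m},\mathbf{s}}$ and $\mathcal{G}_{e,\mathbf{m},\widetilde{\mathbf{s}}}$ are literally identical, and our task becomes to show that the same combinatorial map $\Phi_{\mathfrak{m}^{\infty}}^{\widetilde{\mathbf{s}},\infty}$ intertwines the finite crystal operators.

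First I would record carefully which data the crystal operators $\widetilde{e}_{z}^{\preceq},\widetilde{f}_{z}^{\preceq}$ depend on. By the definitions in \S\ref{Gr}, they depend only on (i) the partition of nodes into residue classes, and (ii) the total order $\preceq_{\mathbf{m}}$ used to read off the $z$-word. The residue classes are the same for $\mathcal{G}_{e,\mathbf{m},\mathbf{s}}$ and for the infinite crystal built from $\widetilde{\mathbf{s}}$ only \emph{after} grouping the $\mathbb{Z}$-residues of the latter into their classes mod $e$; this regrouping is exactly the content of the next section, so at this stage I would instead argue directly that $\Phi_{\mathfrak{m}^{\infty}}^{\widetilde{\mathbf{s}},\infty}$ commutes with each finite operator $\widetilde{f}_{z}^{e}$ for $z\in\mathbb{Z}/e\mathbb{Z}$.

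The central step is the comparison of orders across the wall. The map $\Phi_{\mathfrak{m}^{\infty}}^{\widetilde{\mathbf{s}},\infty}$ modifies only the pair of components $(\lambda^{i},\lambda^{j})$, and only through the symbol operation of \S\ref{comb} driven by the parameter $\delta_{i}-\delta_{j}$ relative to the wall. I would verify that the sign condition~(\ref{sens}) for the \emph{finite} wall translates, under the substitution $s_{i}\mapsto s_{i}-Ne$, exactly into the sign condition $m_{i}-(s_{i}-Ne)-(m_{j}-s_{j})>0$ governing $\Phi^{(s_{i}-Ne,s_{j}),\infty}$; this is a direct rearrangement of~(\ref{sens}). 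Consequently the \emph{same} branch of the definition of $\Phi_{\mathfrak{m}}^{\mathbf{s},\infty}$ is selected, and the map is well-defined independently of whether we regard the wall as finite or infinite. Having matched the maps, I would invoke Proposition~\ref{Prop_isoFI}: it gives that $\Phi_{\mathfrak{m}^{\infty}}^{\widetilde{\mathbf{s}},\infty}$ is a $\mathfrak{g}_{\infty}$-crystal isomorphism between $\mathcal{G}_{\infty,\mathbf{m},\widetilde{\mathbf{s}}}$ and $\mathcal{G}_{\infty,\mathbf{m}^{\prime},\widetilde{\mathbf{s}}}$, i.e.\ it commutes with all $\widetilde{f}_{z}^{\infty}$ for $z\in\mathbb{Z}$.

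The main obstacle — and the step I expect to require the most care — is passing from compatibility with the \emph{infinite} operators $\widetilde{f}_{z}^{\infty}$ ($z\in\mathbb{Z}$) to compatibility with the \emph{finite} operators $\widetilde{f}_{\bar{z}}^{e}$ ($\bar{z}\in\mathbb{Z}/e\mathbb{Z}$). A finite $\bar{z}$-node is an $\mathbb{Z}$-node of residue in the class $\bar{z}+e\mathbb{Z}$, so the finite $\bar{z}$-word is an interleaving, in the order $\preceq_{\mathbf{m}}$, of all the infinite $z$-words for $z\equiv\bar{z}$. The delicate point is that reading these separate infinite words versus reading the merged finite word could a priori change which nodes become \emph{normal} after the $RA$-cancellation procedure. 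I would resolve this by showing that across distinct congruent residues $z\neq z'$ (both $\equiv\bar z$) there is never a cancellable $RA$ pair, because an $R$ and an $A$ of different contents cannot be the endpoints of a single crystal string; more precisely the cancellation is ``block-diagonal'' with respect to the $\mathbb{Z}$-residue, so the merged reduced word is the concatenation of the individual reduced words. Granting this, the good/normal nodes computed finitely coincide with those computed over all congruent infinite residues, and the commutation of $\Phi$ with each $\widetilde{f}_{z}^{\infty}$ assembles into commutation with $\widetilde{f}_{\bar z}^{e}$. This last combinatorial reconciliation is where essentially all the work lies; once it is in place the theorem follows immediately.
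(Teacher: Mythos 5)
Your reduction of the finite crystals with multicharge $\widetilde{\mathbf{s}}$ to those with multicharge $\mathbf{s}$ via Remark \ref{reegalite} is exactly what the paper does, and your check that the sign condition (\ref{sens}) becomes the $e=\infty$ sign condition after the shift $s_{i}\mapsto s_{i}-Ne$ is fine. The gap is in your central step, the passage from commutation with the $\mathfrak{g}_{\infty}$-operators to commutation with the $\mathfrak{g}_{e}$-operators. Your key claim --- that the $RA$-cancellation in the merged $\bar{z}$-word is ``block-diagonal'' in the integer residue, so that no $R$ of content $z$ ever cancels against an $A$ of a different content $z^{\prime}\equiv z\ (\mathrm{mod}\ e)$ --- is false. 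Already for $l=1$, $\lambda=(1,1)$, $s=0$ and $e=2$: the removable node $(2,1)$ has content $-1$ and the addable node $(1,2)$ has content $1$; both have residue $\bar{1}$, the $\bar{1}$-word read in increasing content order is $RA$, and it cancels entirely, so $\widetilde{e}^{\,e}_{\bar{1}}(1,1)=0$, whereas for $e=\infty$ the $(-1)$-word is the single letter $R$ and $(2,1)$ is a good removable node. So the finite reduced word is \emph{not} the concatenation of the infinite reduced words, the finite operators are not assembled from the infinite ones in the way you describe, and the commutation does not follow. Indeed, a $\mathfrak{g}_{\infty}$-crystal isomorphism between Fock spaces is not in general a $\mathfrak{g}_{e}$-crystal isomorphism; this is a special feature of the particular map $\Phi_{\mathfrak{m}^{\infty}}^{\widetilde{\mathbf{s}},\infty}$.

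What the paper actually does at this point is to invoke Propositions 4.1.1 and 5.2.1 of \cite{JL}, which establish precisely that this combinatorial map (the symbol procedure of \S \ref{comb}, i.e.\ the combinatorial $R$-matrix composed with the flip) intertwines the $\widehat{\mathfrak{sl}}_{e}$-crystal operators for the Uglov-type structures, not merely the $\mathfrak{sl}_{\infty}$ ones. That compatibility between the $\mathfrak{sl}_{\infty}$- and $\widehat{\mathfrak{sl}}_{e}$-structures on the Fock space is a substantive theorem with its own proof; if you want a self-contained argument you would have to reprove it, and the naive merging of reduced words cannot be the mechanism.
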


\begin{proof}
It follows from Propositions 4.1.1 and 5.2.1 of \cite{JL} that $\Phi _{%
\mathfrak{m}^{\infty}}^{\widetilde{\mathbf{s}},\infty}$ is a $\mathfrak{g}%
_{e}$-crystal isomorphism between $\mathcal{G}_{e,\mathbf{m},\widetilde {%
\mathbf{s}}}$ and $\mathcal{G}_{e,\mathbf{m}^{\prime},\widetilde{\mathbf{s}}}
$.

Now note that $\mathbf{s}$ and $\widetilde{\mathbf{s}}$ are two multicharges
that are equivalent modulo $e$. So we have in fact the equalities $\mathcal{G%
}_{e,\mathbf{m},\widetilde{\mathbf{s}}}=\mathcal{G}_{e,\mathbf{m},{\mathbf{s}%
}}$ and $\mathcal{G}_{e,\mathbf{m}^{\prime},\widetilde{\mathbf{s}}}=\mathcal{%
G}_{e,\mathbf{m}^{\prime},{\mathbf{s}}}$ (see Remark \ref{reegalite}). The
result follows.
\end{proof}

To simplify, we will denote by $\Psi_{\mathbf{m},\mathbf{m}^{\prime}}^{%
\mathbf{s}}$ the above bijection obtained by crossing a single wall when $%
\mathbf{m}$ and $\mathbf{m}^{\prime}$ satisfy (\ref{sens}). The above result
allows to compute certain $\mathfrak{g}_{e}$-crystal isomorphisms between
two arbitrary Fock spaces $\mathcal{F}_{e,\mathbf{m},\mathbf{s}}$ and $%
\mathcal{F}_{e,\mathbf{m}^{\prime},\mathbf{s}}$ by composing the $\mathfrak{g%
}_{\infty}$-crystal isomorphisms of Theorem \ref{Th_FI}.

\subsection{Detecting the highest weight vertices}

\label{hw}

Using the work \cite{JL2}, it is also possible to decide easily whether a
multipartition $\boldsymbol{\lambda}=$ $(\lambda^{1},\ldots,\lambda^{l})$ is
a highest weight vertex in $\mathcal{F}_{e,\mathbf{m},{\mathbf{s}}}$. Such a
criterion is available in the case of the original Uglov structure. Using
our discussion in \S \ref{delta}, we can extend it to the more general
crystal structures we consider here as follows.

Let $\mathbf{m}^{\prime}\in\mathfrak{M}^{\mathbf{s},e}$ and consider the
permutation $\sigma\in\mathfrak{S}_{l}$ and $\mathbf{s}^{\prime}\in \mathbb{Z%
}^{l}$ as defined in \S \ref{delta}.

We define the symbol $S({\boldsymbol{\lambda}})$ of the $l$-partition ${%
\boldsymbol{\lambda}}$ for the multicharge $\mathbf{s}^{\prime}$ by
extending the definition in \S \ref{comb}. This is the $l$-row tableau $S({%
\boldsymbol{\lambda}})$ whose $j$-th row (the rows are numbered from bottom
to top) is : 
\begin{equation*}
\begin{tabular}{|l|l|l|l|l|l|}
\hline
$-u+1+\lambda_{s_{j}^{\prime}+u}^{j}$ & $\cdots$ & $\cdots$ & $\cdots$ & $%
s_{j}^{\prime}-1+\lambda_{2}^{j}$ & $s_{j}^{\prime}+\lambda_{1}^{j}$ \\ 
\hline
\end{tabular}
\end{equation*}
where $u$ is minimal such that $\lambda_{u+s_{i}^{\prime}}^{i}=0$ for all $%
i=1,\ldots,l$.

An $e$-period is a sequence of $e$ boxes $(b_{1},\ldots,b_{e})$ in $S({%
\boldsymbol{\lambda}})$ such that

\begin{itemize}
\item $b_{1}$ contains the greatest entry of $S({\boldsymbol{\lambda}})$,
say $k$,

\item for all $j=1,\ldots,e$ the entry in the box $b_{j}$ is $k-j+1$,

\item if we write $c(b_{j})\in\{1,2,\ldots,l\}$ for the row of the box $b_{j}
$ in $S({\boldsymbol{\lambda}})$ we have%
\begin{equation*}
\sigma^{-1}(b_{1})\geq\sigma^{-1}(b_{2})\geq\cdots\geq\sigma^{-1}(b_{e}). 
\end{equation*}
\end{itemize}

When $S({\boldsymbol{\lambda}})$ admits an $e$-period, one can delete it in $%
S({\boldsymbol{\lambda}})$ and this yields the symbol of a new $l$-partition 
${\boldsymbol{\lambda}}^{\flat}$. We can then apply the same procedure to $S(%
{\boldsymbol{\lambda}}^{\flat})$ providing it also admits an $e$-period.
This process will eventually terminate and then, one of the two following
situations will happen:

\begin{itemize}
\item we finally get a symbol of the empty $l$-partition. In that case ${%
\boldsymbol{\lambda}}$ is a highest weight vertex.

\item we get a symbol without $e$-period which is not a symbol of the empty $%
l$-partition. In that case ${\boldsymbol{\lambda}}$ is not a highest weight
vertex.
\end{itemize}

\begin{example}
Take $e=3$ and $\mathbf{s}=(0,0)$. Recall the notation of \S\ \ref{delta}.

Assume $\mathbf{m}^{\prime}=(m_{1}^{\prime},m_{2}^{\prime})=(1,3)$. We have 
\begin{equation*}
(s_{1}^{\prime},\delta_{1}^{\prime})=(0,1),\ (s_{2}^{\prime},\delta
_{2}^{\prime})=(3,0), 
\end{equation*}
and $\sigma=\text{Id}\in\mathfrak{S}_{2}$. For the $2$-partition ${%
\boldsymbol{\lambda=}}(3.1,2.2.1.1)$, we get the following symbol : 
\begin{equation*}
\begin{tabular}{|c|c|c|ccc}
\hline
$-2$ & $-1$ & $1$ & $2$ & \multicolumn{1}{|c|}{$\boldsymbol{4}$} & 
\multicolumn{1}{|c|}{$\boldsymbol{5}$} \\ \hline
$-2$ & $0$ & $\boldsymbol{3}$ &  &  &  \\ \cline{1-3}
\end{tabular}
. 
\end{equation*}
We see that we have a $3$-period given by the boxes filled by $5$, $4$ and $3
$. By deleting this period, we obtain the symbol 
\begin{equation*}
\begin{tabular}{|c|c|cc}
\hline
$-2$ & $-1$ & $\boldsymbol{1}$ & \multicolumn{1}{|c|}{$\boldsymbol{2}$} \\ 
\hline
$-2$ & $\boldsymbol{0}$ &  &  \\ \cline{1-2}
\end{tabular}
\ \text{ and finally }%
\begin{tabular}{|c|c}
\hline
$-2$ & \multicolumn{1}{|c|}{$-1$} \\ \hline
$-2$ &  \\ \cline{1-1}
\end{tabular}
. 
\end{equation*}
with corresponds to the empty bipartition. Hence ${\boldsymbol{\lambda}}$ is
a highest weight vertex in $\mathcal{G}_{3,(1,3),(0,0)}$.

Now assume $\mathbf{m}^{\prime}=(m_{1}^{\prime},m_{2}^{\prime})=(0,4)$. We
have 
\begin{equation*}
(s_{1}^{\prime},\delta_{1}^{\prime})=(0,0),\ (s_{2}^{\prime},\delta
_{2}^{\prime})=(3,1), 
\end{equation*}
and $\sigma=(1,2)\in\mathfrak{S}_{2}$. For the $2$-partition $(3.1,2.2.1.1)$%
, the symbol that we have to consider is the same as above but the
definition of a period is now twisted by $\sigma$.\ This time, we have no $3$%
-period and ${\boldsymbol{\lambda}}$ is no longer a highest weight vertex in 
$\mathcal{G}_{3,(0,4),(0,0)}$.
\end{example}

\begin{example}
We will here consider the same example as \cite[ex 5.6]{Lo}, 
%this will give already  a good
%hint that our isomorphisms are connected with Losev' wall crossing maps.
We assume that $\mathbf{s}=(0,0)$ and $e=2$. We take $n=3$. Then we have $3$
hyperplanes to consider : 
\begin{equation*}
\mathfrak{m}_{1,2,-1}^{(0,0),2}:=\{(m_{1},m_{2})\in\mathbb{Q}^{2}\ |\
m_{2}-m_{1}=-2\}, 
\end{equation*}%
\begin{equation*}
\mathfrak{m}_{1,2,0}^{(0,0),2}:=\{(m_{1},m_{2})\in\mathbb{Q}^{2}\ |\
m_{2}-m_{1}=0\}, 
\end{equation*}%
\begin{equation*}
\mathfrak{m}_{1,2,1}^{(0,0),2}:=\{(m_{1},m_{2})\in\mathbb{Q}^{2}\ |\
m_{2}-m_{1}=2\}. 
\end{equation*}
The $2$-partitions of $3$ are : 
\begin{equation*}
(\emptyset,1.1.1),(\emptyset,2.1), (\emptyset,3), (1,1.1), (1,2), (1.1,1),
(1.1.1,\emptyset), (2,1),(2.1,\emptyset),(3,\emptyset) 
\end{equation*}
Now we pick one parameter in each of the four associated chambers :

\begin{enumerate}
\item Take $\mathbf{m}[1]=(m_{1},m_{2})\in\mathbb{Q}^{2}$ such that $%
m_{2}-m_{1}<-2$. %1ere iso avec s=(2,0)

\item Take $\mathbf{m}[2]=(m_{1},m_{2})\in\mathbb{Q}^{2}$ such that $%
0>m_{2}-m_{1}>-2$.

\item Take $\mathbf{m}[3]=(m_{1},m_{2})\in\mathbb{Q}^{2}$ such that $%
2>m_{2}-m_{1}>0$.

\item Take $\mathbf{m}[4]=(m_{1},m_{2})\in\mathbb{Q}^{2}$ such that $%
m_{2}-m_{1}>2$.
\end{enumerate}

The following table gives the isomorphisms computed using our procedure, the
notation $(\star)$ indicates that the associated bipartitions are highest
weight vertices for the parameters considered. 
\begin{equation*}
\begin{array}{|c|c|c|c|c|}
\hline
& \mathbf{m}[1] & \mathbf{m}[2] & \mathbf{m}[3] & \mathbf{m}[4] \\ 
\hline\hline
(\star) & (\emptyset,1.1.1) & (\emptyset,1.1.1) & (1.1.1,\emptyset) & 
(1.1.1,\emptyset) \\ \hline
& (\emptyset,2.1) & (\emptyset,2.1) & (2.1,\emptyset) & (2.1,\emptyset ) \\ 
\hline
(\star) & (\emptyset,3) & (1.1.1,\emptyset) & (\emptyset,1.1.1) & 
(3,\emptyset) \\ \hline
& (1,1.1) & (1,1.1) & (1.1,1) & (1.1,1) \\ \hline
& (1,2) & (\emptyset,3) & (3,\emptyset) & (2,1) \\ \hline
(\star) & (1.1,1) & (1,2) & (2,1) & (1,1.1) \\ \hline
& (1.1.1,\emptyset) & (1.1,1) & (1,1.1) & (\emptyset,1.1.1) \\ \hline
& (2,1) & (2,1) & (1,2) & (1,2) \\ \hline
& (2.1,\emptyset) & (2.1,\emptyset) & (\emptyset,2.1) & (\emptyset ,2.1) \\ 
\hline
& (3,\emptyset) & (3,\emptyset) & (\emptyset,3) & (\emptyset,3) \\ \hline
\end{array}
\end{equation*}
\end{example}

\section{Wall crossing bijections for Cherednik algebras}

\label{Sec_WCB}We now give an interpretation of the above isomorphisms in
the context of rational Cherednik algebras. In \cite{Lo}, Losev has
introduced certain combinatorial maps between the sets of simple modules in
the category $\mathcal{O}$ for rational Cherednik algebras. We are going to
explain how these maps (called \textquotedblleft wall-crossing
bijections\textquotedblright) are connected with our crystal isomorphisms.
We refer to \cite{Losurvey} for more details on the representation theory of
Cherednik algebras and for problems we are interested in this section.

\subsection{Rational Cherednik algebras}

Let $\mathcal{H}_{\kappa,\mathbf{s}}(n)$ be the rational Cherednik algebra
associated with the complex reflection group of type $W:=G(l,1,n)$ acting on 
$\mathfrak{h}:=\mathbb{C}^{n}$. As a vector space, this algebra is $S(%
\mathfrak{h}^{\ast})\otimes\mathbb{C}W\otimes S(\mathfrak{h})$ (where $S(V) $
denotes the symmetric algebra of the vector space $V$).\ It admits a
presentation by generators and relations for which we refer to \cite[\S 2.3]%
{GoLo}.

Importantly, this presentation depends on a parameter $s:=(\kappa ,\mathbf{s}%
)\in\mathbb{C}\times\mathbb{C}^{l}$. This parameter is the one used in \cite%
{Lo,Losurvey} as well as in \cite{GoLo} (the reader may look at the
relations between the different parametrizations given in \cite[\S 2.3.2]%
{GoLo})

We will consider the category $\mathcal{O}_{n,s}$ for this algebra whose
simple objects are parametrized by the set $\Pi^{l}(n)$, which also index
the set of irreducible representations of the complex reflection group $W$
in characteristic $0$.

\begin{remark}
An important problem in this theory is to compute the support of a simple
module in the category $\mathcal{O}_{n,s}$ parametrized by an $l$-partition $%
{\boldsymbol{\lambda}}$. This in particular leads to a classification of the
finite dimensional simple modules in this category.
\end{remark}

As explained in \cite[\S 4]{Lo} and \cite[\S 3.1.3]{Losurvey}, in most of
the questions relative to the study of the category $\mathcal{O}$ we can
assume (and we will do in the sequel) the following condition is satisfied.

\begin{condition}
In the rest of the paper we assume that:

\begin{enumerate}
\item $\kappa=\frac{r}{e}$ is a positive rational number where $r$ and $e$
are relatively prime,

\item $r.s_{j}\in\mathbb{Z}$ for any $j=1,\ldots,l$.
\end{enumerate}
\end{condition}

In particular, we have now $s:=(\kappa,\mathbf{s})\in\mathbb{Q}\times 
\mathbb{Q}^{l}$. Let us denote by $\mathcal{S}$ the subset of $\mathbb{Q}%
\times\mathbb{Q}^{l}$ satisfying $(1)$ and $(2)$.

In \cite{Sh}, Shan has introduced an action of the quantum group $\mathfrak{g%
}_{e}$ on a Fock space defined from a categorical action on the direct sum
over $n$ of the categories $\mathcal{O}_{n,s}$. This action heavily depends
on the choice of the parameter $s$. It also induces a structure of $%
\mathfrak{g}_{e}$-crystal on the set of $l$-partitions. This crystal
structure can be defined by using a relevant total order $\leq$ on $z$-nodes
as we did in Section \ref{Gr}. Consider $\gamma=(a,b,c)$ and $\gamma^{\prime
}=(a^{\prime},b^{\prime},c^{\prime})$ two such $z$-nodes.

\begin{definition}
We set $\gamma\leq\gamma^{\prime}$ if and only if 
\begin{equation*}
\kappa l(b-a+s_{c})-c\leq\kappa
l(b^{\prime}-a^{\prime}+s_{c^{\prime}})-c^{\prime}. 
\end{equation*}
The associated oriented graph is denoted by $\mathcal{G}_{s}$ with indexing
set $I_{s}$.
\end{definition}

Observe this is indeed a total order since the equality 
\begin{equation*}
\kappa l(b-a+s_{c})-c=\kappa l(b^{\prime }-a^{\prime }+s_{c^{\prime
}})-c^{\prime },
\end{equation*}%
implies that $l$ divides $c-c^{\prime }$. But $0\leq \left\vert c-c^{\prime
}\right\vert <l$ so we have in fact $c=c^{\prime }$ and $\gamma =\gamma
^{\prime }$. We are going to see that the graph structure $\mathcal{G}_{s}$
coincides with a graph structure already defined which is a crystal
structure up to reparametrization of the colors. Since $r$ and $e$ are
relatively prime, we have $e\mathbb{Z}+r\mathbb{Z}=\mathbb{Z}$ and $e\mathbb{%
Z}\cap r\mathbb{Z}=er\mathbb{Z}$.\ Then, for any integer $a$ there exists a
unique pair $(c,d)$ such that $a=ed+rc$ and $c\in \{0,\ldots ,e-1\}$.\ Since
we have $rs_{j}\in \mathbb{Z}$ for any $j=1,\ldots ,l$, we can set: 
\begin{equation}
rs_{j}=ed_{j}+rc_{j},  \label{defc}
\end{equation}%
where $c_{j}\in \{0,\ldots ,e-1\}$. Note then that $c_{j}$ is equivalent to $%
s_{j}$ modulo $\kappa ^{-1}\mathbb{Z}$. Recall Definition \ref{DefIs}. We
have the following elementary lemma.

\begin{lemma}
The map 
\begin{equation*}
\begin{array}{rccl}
\psi : & \mathbb{Z}/e\mathbb{Z} & \rightarrow  & I_{s} \\ 
& i(\text{mod }e) & \mapsto  & i(\text{mod }\kappa ^{-1}\mathbb{Z})%
\end{array}%
\end{equation*}%
is well defined and is a bijection 
\end{lemma}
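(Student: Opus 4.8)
The plan is to show that the map $\psi:\mathbb{Z}/e\mathbb{Z}\to I_s$ sending $i\ (\mathrm{mod}\ e)$ to $i\ (\mathrm{mod}\ \kappa^{-1}\mathbb{Z})$ is well defined and bijective. Since $\kappa=r/e$, we have $\kappa^{-1}=e/r$, so the target $\mathbb{Q}/\kappa^{-1}\mathbb{Z}=\mathbb{Q}/\tfrac{e}{r}\mathbb{Z}$. First I would unwind the definition of $I_s$ from Definition~\ref{DefIs}: it is the set of classes $x+s_j+\kappa^{-1}\mathbb{Z}$ with $x\in\mathbb{Z}$ and $j\in\{1,\ldots,l\}$. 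The key preliminary observation, already recorded in the excerpt, is that $c_j\equiv s_j\ (\mathrm{mod}\ \kappa^{-1}\mathbb{Z})$, where $c_j\in\{0,\ldots,e-1\}$ comes from the decomposition $rs_j=ed_j+rc_j$ in~(\ref{defc}). This lets me replace each $s_j$ by the integer $c_j$ inside the classes defining $I_s$, so that $I_s=\{x+c_j+\kappa^{-1}\mathbb{Z}\mid x\in\mathbb{Z},\ 1\le j\le l\}\subseteq \mathbb{Z}+\kappa^{-1}\mathbb{Z}\ (\mathrm{mod}\ \kappa^{-1}\mathbb{Z})$, the image of the integers in $\mathbb{Q}/\kappa^{-1}\mathbb{Z}$.

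\textbf{Well-definedness.}
To see $\psi$ is well defined I must check that if $i\equiv i'\ (\mathrm{mod}\ e)$ then $i\equiv i'\ (\mathrm{mod}\ \kappa^{-1}\mathbb{Z})$, i.e. that $e\mathbb{Z}\subseteq \kappa^{-1}\mathbb{Z}=\tfrac{e}{r}\mathbb{Z}$. This is immediate since $e=r\cdot\tfrac{e}{r}\in\tfrac{e}{r}\mathbb{Z}$, hence every multiple of $e$ lies in $\tfrac{e}{r}\mathbb{Z}$. I should also confirm that the image of $\psi$ actually lands in $I_s$: taking $j$ with $c_j=0$ is not guaranteed, so instead I note that for any integer $i$, writing $i=0\cdot$-shift, the class $i+\kappa^{-1}\mathbb{Z}$ equals $(i-c_1)+c_1+\kappa^{-1}\mathbb{Z}$ with $i-c_1\in\mathbb{Z}$, which is of the required form $x+c_1+\kappa^{-1}\mathbb{Z}\in I_s$. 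Thus $\psi$ maps into $I_s$.

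\textbf{Surjectivity and injectivity.}
For surjectivity, an arbitrary element of $I_s$ has the form $x+c_j+\kappa^{-1}\mathbb{Z}$ with $x+c_j\in\mathbb{Z}$, which is precisely $\psi\bigl((x+c_j)\ \mathrm{mod}\ e\bigr)$; hence every class in $I_s$ is hit. For injectivity I would count, or argue directly. The cleanest route is to show both sets have exactly $e$ elements and use surjectivity. The source $\mathbb{Z}/e\mathbb{Z}$ has $e$ elements, so it suffices to show $|I_s|=e$; equivalently that the integers project onto an $e$-element subgroup of $\mathbb{Q}/\kappa^{-1}\mathbb{Z}$. The image of $\mathbb{Z}$ in $\mathbb{Q}/\tfrac{e}{r}\mathbb{Z}$ is the cyclic group $(\mathbb{Z}+\tfrac{e}{r}\mathbb{Z})/\tfrac{e}{r}\mathbb{Z}\cong \mathbb{Z}/(\tfrac{e}{r}\mathbb{Z}\cap\mathbb{Z})^{-1}$-style quotient; concretely two integers $i,i'$ are congruent mod $\tfrac{e}{r}\mathbb{Z}$ iff $i-i'\in\tfrac{e}{r}\mathbb{Z}\cap\mathbb{Z}$. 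Since $\gcd(r,e)=1$, one computes $\tfrac{e}{r}\mathbb{Z}\cap\mathbb{Z}=e\mathbb{Z}$: indeed $\tfrac{e}{r}k\in\mathbb{Z}$ forces $r\mid k$, so $\tfrac{e}{r}k\in e\mathbb{Z}$, and conversely $e\in\tfrac{e}{r}\mathbb{Z}$. Therefore the image of $\mathbb{Z}$ has exactly $e$ elements, giving $|I_s|=e$ and forcing $\psi$ to be a bijection. \textbf{The main obstacle} is the coprimality computation $\tfrac{e}{r}\mathbb{Z}\cap\mathbb{Z}=e\mathbb{Z}$, which is exactly where $\gcd(r,e)=1$ enters and which pins down why both sides have the same cardinality; everything else is bookkeeping on the definitions of $I_s$ and the $c_j$.
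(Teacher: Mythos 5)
Your proof is correct and follows essentially the same route as the paper's: well-definedness from $e\mathbb{Z}\subseteq\kappa^{-1}\mathbb{Z}$, surjectivity via the congruence $c_j\equiv s_j\ (\mathrm{mod}\ \kappa^{-1}\mathbb{Z})$, and injectivity resting on the coprimality computation $\tfrac{e}{r}\mathbb{Z}\cap\mathbb{Z}=e\mathbb{Z}$ (the paper phrases this as a direct injectivity check rather than your cardinality count, but the computation is identical). Your use of $c_1$ in place of $s_1$ when checking that the image lands in $I_s$ is in fact slightly cleaner than the paper's $i=(i-s_1)+s_1$, since $i-s_1$ need not be an integer.
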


\begin{proof}
Assume $i_{1}$ and $i_{2}$ are such that $i_{1}=i_{2}($mod $e)$ and set $%
i_{1}=i_{2}+ae$ with $a\in \mathbb{Z}$. Then $i_{1}=i_{2}+ar\kappa ^{-1}$
since $\kappa =\frac{r}{e}$ thus $i_{1}=i_{2}($mod $\kappa ^{-1}\mathbb{Z})$
and $\psi $ is well-defined from $\mathbb{Z}/e\mathbb{Z}$ to $\mathbb{Q}%
/\kappa ^{-1}\mathbb{Z}$. Now we can write any $i\in \mathbb{Z}$ on the form 
$i=(i-s_{1})+s_{1}$.\ So $\psi (\mathbb{Z}/e\mathbb{Z})\subset I_{s}$ (see
Definition \ref{DefIs}). Conversely, for any integer $x$ and any $j=1,\ldots
,l$, we have 
\begin{equation*}
x+s_{j}(\text{mod }\kappa ^{-1}\mathbb{Z})=x+c_{j}(\text{mod }\kappa ^{-1}%
\mathbb{Z})
\end{equation*}%
so that $\psi (x+c_{j}($mod $e))=x+s_{j}($mod $\kappa ^{-1}\mathbb{Z})$ and $%
\psi $ is surjective. Finally assume $\psi (i_{1}($mod $e))=\psi (i_{2}($mod 
$e))$.\ This implies that $i_{1}-i_{2}$ belongs to $\kappa ^{-1}\mathbb{Z=}%
\frac{e}{r}\mathbb{Z}$. So $i_{1}-i_{2}$ belongs to $e\mathbb{Z}$ because $e$
and $r$ are relatively prime and $i_{1}-i_{2}$ is an integer.
\end{proof}

For $j=1,\ldots ,l$ write also%
\begin{equation*}
m_{j}=s_{j}-\frac{j}{\kappa l}=s_{j}-\frac{je}{rl}\in \mathbb{Q},
\end{equation*}%
and set $\boldsymbol{m}=(m_{1},\ldots ,m_{l})\in \mathbb{Q}^{l}$.

\subsection{Relation with extended JMMO Fock space structure (2)}

In the following proposition, we will consider the oriented graph $\mathcal{G%
}_{s}$ with indexing set $I_{s}$ and the crystal graph $\mathcal{G}_{e,%
\mathbf{m},\mathbf{c}}$ which indexing set is $\mathbb{Z}/e\mathbb{Z}$.

\begin{proposition}
\label{Prop_Iso}The colored oriented graph $\mathcal{G}_{s}$ is equivalent
to the crystal $\mathcal{G}_{e,\mathbf{m},\mathbf{c}}$. More precisely, we
have an isomorphism of oriented graphs $\theta$ between $\mathcal{G}_{e,%
\mathbf{m},\mathbf{c}}$ and $\mathcal{G}_{s}$ and according to the notation
of \S \ref{Gr}, the following maps : 
\begin{equation*}
\begin{array}{rccl}
\Psi & \Pi^{l}(n) & \rightarrow & \Pi^{l}(n) \\ 
& {\boldsymbol{\lambda}} & \mapsto & {\boldsymbol{\lambda}}%
\end{array}
,\ 
\begin{array}{rccl}
\psi & \mathbb{Z}/e\mathbb{Z} & \rightarrow & I_{s} \\ 
& i(\text{mod }e) & \mapsto & i(\text{mod }\kappa^{-1}\mathbb{Z}).%
\end{array}
\end{equation*}
\end{proposition}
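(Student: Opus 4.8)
The plan is to show that the two colored graphs $\mathcal{G}_s$ and $\mathcal{G}_{e,\mathbf{m},\mathbf{c}}$ have literally the same underlying oriented structure (so that $\Psi = \mathrm{id}$ works), with the only discrepancy being in how the arrows are labelled; the color bijection is then exactly the map $\psi$ of the preceding Lemma. Since both graphs are built in the uniform manner of \S\ref{Gr} from an order on same-residue nodes, it suffices to compare two things: first, that a node has residue $z = \psi(i)$ in $\mathcal{G}_s$ precisely when it has residue $i \pmod e$ in $\mathcal{G}_{e,\mathbf{m},\mathbf{c}}$; and second, that the total order $\leq$ defining $\mathcal{G}_s$ coincides with the order $\preceq_{\mathbf{m}}$ on each block of same-residue nodes. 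Once both hold, the crystal operators $\widetilde{e}, \widetilde{f}$ agree, hence the arrows and the highest-weight vertices agree, which is exactly the isomorphism of oriented graphs $\theta$ we want.

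\emph{First I would} handle the residues. In $\mathcal{G}_s$ the residue of $\gamma=(a,b,c)$ is $\mathrm{cont}(\gamma)=b-a+s_c \pmod{\kappa^{-1}\mathbb{Z}}$, whereas in $\mathcal{G}_{e,\mathbf{m},\mathbf{c}}$ the multicharge is $\mathbf{c}=(c_1,\dots,c_l)$ and the residue is $b-a+c_c \pmod e$. By the defining relation \eqref{defc}, $c_j \equiv s_j \pmod{\kappa^{-1}\mathbb{Z}}$, so the note immediately after \eqref{defc} already records that these two residue classes correspond under $\psi$. Concretely, $\gamma$ is an $i$-node (mod $e$) for the charge $\mathbf{c}$ iff $b-a+c_c \equiv i \pmod e$, and applying $\psi$ gives $b-a+s_c \equiv i \pmod{\kappa^{-1}\mathbb{Z}}$, i.e.\ $\gamma$ is a $\psi(i)$-node of $\mathcal{G}_s$. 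Since $\psi$ is a bijection $\mathbb{Z}/e\mathbb{Z}\to I_s$ by the Lemma, the partition of the nodes of any $l$-partition into residue classes is the \emph{same} partition in both graphs, only relabelled by $\psi$.

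\emph{The main step}, and the one I expect to be the crux, is matching the orders. For $\mathcal{G}_s$ we order two same-residue nodes by comparing $\kappa l(b-a+s_c)-c$, while $\preceq_{\mathbf{m}}$ compares $b-a+m_c = b-a + s_c - \frac{j}{\kappa l}$ (here $j=c$). The plan is a purely algebraic manipulation: multiplying the quantity $b-a+m_c = b-a+s_c-\frac{c}{\kappa l}$ through by the positive constant $\kappa l$ gives $\kappa l(b-a+s_c)-c$, which is exactly the $\mathcal{G}_s$-order function. Because $\kappa = r/e > 0$ and $l>0$, multiplication by $\kappa l$ is strictly order-preserving, so $\gamma \preceq_{\mathbf{m}} \gamma'$ iff $\gamma \leq \gamma'$ in $\mathcal{G}_s$. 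The one subtlety to verify here is that $\mathbf{m}\notin \mathfrak{M}^{\mathbf{s},e}$, so that $\preceq_{\mathbf{m}}$ is genuinely a total order and Proposition~\ref{Prop_chamber} applies; this follows because the order function is injective on each residue class (the paper has already checked that $\kappa l(b-a+s_c)-c$ separates distinct nodes, using that $l$ cannot divide a nonzero $c-c'$ with $|c-c'|<l$), forcing $\mathbf{m}$ off every wall.

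\emph{To conclude}, I would assemble these two matchings. The residue classes coincide (via $\psi$) and the orders coincide on each class, so the words $w_z$, the reduced words $\widetilde w_z$, and hence the operators $\widetilde e_z^{\leq}, \widetilde f_z^{\leq}$ of $\mathcal{G}_s$ agree with $\widetilde e_i^{\preceq_{\mathbf{m}}}, \widetilde f_i^{\preceq_{\mathbf{m}}}$ of $\mathcal{G}_{e,\mathbf{m},\mathbf{c}}$ under the color correspondence $i \leftrightarrow \psi(i)$. Therefore the identity map $\Psi$ on $\Pi^l(n)$ together with $\psi$ satisfies both defining conditions of a graph isomorphism from \S\ref{Giso}: an arrow $\boldsymbol{\lambda}\overset{i}{\to}\boldsymbol{\mu}$ exists in $\mathcal{G}_{e,\mathbf{m},\mathbf{c}}$ iff $\boldsymbol{\lambda}\overset{\psi(i)}{\to}\boldsymbol{\mu}$ exists in $\mathcal{G}_s$, and highest weight vertices (where all $\widetilde e$ vanish) correspond. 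This gives the isomorphism $\theta$ and shows the two graphs are equivalent in the sense that $\Psi=\mathrm{id}$, completing the proof.
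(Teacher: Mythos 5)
Your proposal is correct and follows essentially the same two-step argument as the paper: matching the residue classes under $\psi$ (using $c_j\equiv s_j \pmod{\kappa^{-1}\mathbb{Z}}$ together with the coprimality of $r$ and $e$, which you delegate to the preceding Lemma rather than recomputing) and then identifying the order $\leq$ with $\preceq_{\mathbf{m}}$ via the identity $\kappa l(b-a+m_c)=\kappa l(b-a+s_c)-c$. No substantive differences.
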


\begin{proof}
First, we show that two boxes have the same residue for $(\kappa,\mathbf{s})$
if and only they have the same residue for $(1/e,\mathbf{c})$. This follows
from the fact that, for two nodes $(x,y,i)$ and $(x^{\prime},y^{\prime},j)$
of an $l$-partition, we have 
\begin{equation*}
x-y+s_{i}=x^{\prime}-y^{\prime}+s_{j}+\kappa^{-1}\mathbb{Z}
\end{equation*}
if and only if 
\begin{equation*}
r(x-y+s_{i})=r(x^{\prime}+y^{\prime}+s_{j})+e\mathbb{Z}, 
\end{equation*}
because $\kappa=\frac{r}{e}$. By using (\ref{defc}) we get%
\begin{equation*}
r(x-y+c_{i})=r(x^{\prime}+y^{\prime}+c_{j})+e\mathbb{Z}. 
\end{equation*}
Now both $(x-y+c_{i})$ and $(x^{\prime}+y^{\prime}+c_{j})$ are integers. As $%
(e,r)=1$, we have 
\begin{equation*}
x-y+c_{i}=x^{\prime}+y^{\prime}+c_{j}+e\mathbb{Z}, 
\end{equation*}
which is what we wanted to show. Observe also that the residue of $(x,y,i)$
for $(1/e,\mathbf{c})$ and $(\kappa,\mathbf{s})$ are respectively equal to 
\begin{equation*}
(x-y+c_{i})(\mathrm{mod}e)\text{ and }(x-y+s_{i})(\mathrm{mod}\kappa ^{-1}%
\mathbb{Z}), 
\end{equation*}
and we have 
\begin{equation*}
(x-y+c_{i})\equiv(x-y+s_{i})(\mathrm{mod}\kappa^{-1}\mathbb{Z}). 
\end{equation*}

It just remains to show that the order $\leq$ on $z$-nodes corresponds to
the order $\preceq_{\mathbf{m}}$.

Let $\gamma=(x,y,i)$ and $\gamma^{\prime}=(x^{\prime},y^{\prime},j)$ be two
nodes of the $l$-partition. Then we have the equivalences%
\begin{multline*}
\gamma\leq\gamma^{\prime}\Longleftrightarrow\kappa(x-y+s_{c})-\frac{c}{l}%
=\kappa(x^{\prime}-y^{\prime}+s_{c^{\prime}})-\frac{c^{\prime}}{l} \\
\Longleftrightarrow x-y+m_{i}\leq x^{\prime}-y^{\prime}+m_{j}.
\end{multline*}
which is exactly what we wanted.
\end{proof}

\begin{remark}
Combining this proposition with the results in \S \ref{hw}, we obtain a
criterion to decide whether an $l$-partition is a highest weight in $%
\mathcal{G}_{\mathbf{s}}$. The finite dimensional modules in the associated
category $\mathcal{O}$ of the Cherednik algebra are in particular
parametrized by $l$-partitions which satisfy this criterion. We refer to 
\cite{GHe} for other results in this direction and a detailed investigation
of the crystal action of the Heisenberg algebra on $\mathcal{G}_{\mathbf{s}}$
which also appears in the study of this problem.
\end{remark}

Let us now review the wall crossing bijections. Fix $n\in\mathbb{N}$. In the
set of parameters $\mathcal{S}$, one can define certain hyperplanes called
essential walls. Set%
\begin{equation*}
h_{j}=\kappa m_{j}=\kappa s_{j}-j/l,\ j=1,\ldots,l. 
\end{equation*}

\begin{definition}
Recall $n$ is fixed. Given $i,j$ distinct in $\{1,\ldots,l\}$, the essential
wall parametrized by $(i,j)$ is the set of parameters $s=(\kappa ,\mathbf{s}%
)\in\mathcal{S}$ such that there exists an integer $a$ satisfying:

\begin{enumerate}
\item $\left\vert a\right\vert <n,$

\item $m_{i}-m_{j}=a,$

\item $s_{i}-s_{j}-a\in\kappa^{-1}\mathbb{Z}$.
\end{enumerate}
\end{definition}

Consider two parameters $s=(\kappa,\mathbf{s})$ and $s^{\prime}=(\kappa
^{\prime},\mathbf{s}^{\prime})$ in $\mathcal{S}$ satisfying : 
\begin{equation}
\kappa-\kappa^{\prime}\in\mathbb{Z}\text{ and }\forall j\in\{1,\ldots ,l\}\
\kappa s_{j}-\kappa^{\prime}s_{j}^{\prime}\in\mathbb{Z} 
\label{ParamIsoLosev}
\end{equation}
and such that $s$ and $s^{\prime}$ are separated by one essential wall.

\begin{remark}
Assume $s$ and $s^{\prime}$ satisfy (\ref{ParamIsoLosev}). This means that
there exist $k\in\mathbb{Z}$ and $t\in\mathbb{Z}$ such that $%
r^{\prime}s_{j}^{\prime}=rs_{j}+ke$ and $r^{\prime}=r+t.e$.

By (\ref{defc}), for each $j=1,\ldots,l$, there exists a unique $%
(c_{j},c_{j}^{\prime})\in\{0,\ldots,e-1\}^{2}$ and $(d_{j},d_{j}^{\prime})\in%
\mathbb{Z}^{2}$ such that 
\begin{equation*}
rs_{j}=ed_{j}+rc_{j},\ r^{\prime}s_{j}^{\prime}=ed_{j}^{\prime}+r^{\prime
}c_{j}^{\prime}. 
\end{equation*}
We obtain: 
\begin{equation*}
\begin{array}{rcl}
r^{\prime}s_{j}^{\prime} & = & rs_{j}+ke \\ 
& = & ed_{j}+rc_{j}+ke \\ 
& = & ed_{j}+r^{\prime}c_{j}-tec_{j}+ke \\ 
& = & e(d_{j}-te+k)+r^{\prime}c_{j}.%
\end{array}
\end{equation*}
We thus have $c_{j}^{\prime}=c_{j}$ and $d_{j}^{\prime}=d_{j}-te+k$.

By Proposition \ref{Prop_Iso} $\mathcal{G}_{s}$ and $\mathcal{G}%
_{s^{^{\prime }}}$ are respectively isomorphic to $\mathcal{G}_{e,\mathbf{m},%
\mathbf{c}}$ and $\mathcal{G}_{e,\mathbf{m}^{\prime},\mathbf{c}}$ for good
choices of $\mathbf{m}$ and $\mathbf{m}^{\prime}$. But $\mathcal{G}_{e,%
\mathbf{m}^{\prime},\mathbf{c}}$ and $\mathcal{G}_{e,\mathbf{m},\mathbf{c}}$
are isomorphic as crystal graphs because the associated Fock spaces have the
same multicharge. So $\mathcal{G}_{\mathbf{s}}$ and $\mathcal{G}_{\mathbf{s}%
^{\prime}}$ are isomorphic as soon as $s$ and $s^{\prime}$ satisfy (\ref%
{ParamIsoLosev}).
\end{remark}

In \cite[Prop. 5.9]{Lo}, Losev defines a bijection $\mathfrak{wc}%
_{s\rightarrow s^{\prime}}$ between $l$-partitions which is an isomorphism
of graphs in the sense of \S \ref{Giso}.

\begin{proposition}
\label{Prop_bijecLOsev}Given $s$ and $s^{\prime}$ in $\mathcal{S}$
satisfying (\ref{ParamIsoLosev}) and separated by the essential wall
parametrized by $(i,j)$, there is a bijection $\mathfrak{wc}_{s\rightarrow
s^{\prime}}$ on the set of $l$-partitions such that if ${\boldsymbol{\mu}}=%
\mathfrak{wc}_{s\rightarrow s^{\prime}}({\boldsymbol{\lambda}})$ we have

\begin{itemize}
\item $\lambda^{k}=\mu^{k}$ if $k\neq i$ and $k\neq j$,

\item $(\mu^{i},\mu^{j})=\mathfrak{wc}_{(\kappa,s_{i},s_{j})\rightarrow
(\kappa^{\prime},s_{i}^{\prime},s_{j}^{\prime})}(\lambda^{i},\lambda^{j})$
\end{itemize}

where $\mathfrak{wc}_{(\kappa ,s_{i},s_{j})\rightarrow (\kappa ^{\prime
},s_{i}^{\prime },s_{j}^{\prime })}$ is the unique graph isomorphism between 
$\mathcal{G}_{s}$ and $\mathcal{G}_{s^{\prime }}$ preserving both the
labelling of the arrows and the rank of the bipartitions.
\end{proposition}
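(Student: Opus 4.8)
The plan is to take Losev's bijection as given and to pin it down by transporting the whole situation into the Uglov picture, where Theorem \ref{Th_FI} produces an explicit crystal isomorphism, and then to invoke uniqueness to force the two to agree. Since $s$ and $s^{\prime}$ lie in $\mathcal{S}$ and satisfy (\ref{ParamIsoLosev}), the Remark preceding the statement shows that the reduced multicharges coincide, $c_{j}^{\prime}=c_{j}$, so Proposition \ref{Prop_Iso} identifies $\mathcal{G}_{s}$ with $\mathcal{G}_{e,\mathbf{m},\mathbf{c}}$ and $\mathcal{G}_{s^{\prime}}$ with $\mathcal{G}_{e,\mathbf{m}^{\prime},\mathbf{c}}$ for one and the same multicharge $\mathbf{c}$, the two graphs differing only through the parameters $\mathbf{m}$ and $\mathbf{m}^{\prime}$. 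First I would check that, under this dictionary, the essential wall parametrized by $(i,j)$ corresponds to a single Uglov wall $\mathfrak{m}_{i,j,N}^{\mathbf{c},e}$ separating $\mathbf{m}$ from $\mathbf{m}^{\prime}$ and to this wall only: this is a direct translation of the defining conditions $|a|<n$, $m_{i}-m_{j}=a$ and $s_{i}-s_{j}-a\in\kappa^{-1}\mathbb{Z}$ into the equation defining $\mathfrak{m}_{i,j,N}^{\mathbf{c},e}$, using $\kappa=\frac{r}{e}$ and (\ref{defc}), and the bound $|a|<n$ places the wall inside the relevant finite family $\mathfrak{M}_{n}^{\mathbf{c},e}$.

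With this identification in hand, Theorem \ref{Th_FI} shows that the crystal isomorphism across this wall is $\Psi_{\mathbf{m},\mathbf{m}^{\prime}}^{\mathbf{c}}=\Phi_{\mathfrak{m}^{\infty}}^{\widetilde{\mathbf{c}},\infty}$ for the shifted charge $\widetilde{\mathbf{c}}=(c_{1},\ldots,c_{i}-N.e,\ldots,c_{l})$. By the very formula defining $\Phi_{\mathfrak{m}}^{\mathbf{s},\infty}$, this map fixes every component $\lambda^{k}$ with $k\neq i,j$ and acts on the pair $(\lambda^{i},\lambda^{j})$ through the level-two map $\Phi^{(c_{i},c_{j}),\infty}$ (or its inverse, according to the side of the wall), which is computed by the explicit symbol procedure of \S\ref{comb}. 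Transported back through $\theta$ and the colour relabeling $\psi$ of Proposition \ref{Prop_Iso}, this exhibits a rank-preserving, arrow-preserving graph isomorphism $\mathcal{G}_{s}\rightarrow\mathcal{G}_{s^{\prime}}$ having exactly the two bullet properties of the statement, with the $(i,j)$-component given by an explicit combinatorial rule.

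It remains to match this explicit map with Losev's bijection and to establish the uniqueness clause. By \cite[Prop. 5.9]{Lo}, $\mathfrak{wc}_{s\rightarrow s^{\prime}}$ is a graph isomorphism in the sense of \S\ref{Giso}; moreover, being induced for each fixed $n$ by a perverse equivalence between $\mathcal{O}_{n,s}$ and $\mathcal{O}_{n,s^{\prime}}$, it carries $\Pi^{l}(n)$ to itself and so preserves the rank of the $l$-partitions. The footnote to Proposition \ref{uni} asserts that, for $e=\infty$ and level two, a graph isomorphism preserving both the arrow labelling and the rank is unique; since Theorem \ref{Th_FI} realizes the finite-$e$ wall-crossing as precisely the $e=\infty$ map $\Phi_{\mathfrak{m}^{\infty}}^{\widetilde{\mathbf{c}},\infty}$ attached to the shifted charge, this uniqueness descends to the finite-$e$ setting. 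Consequently $\mathfrak{wc}_{(\kappa,s_{i},s_{j})\rightarrow(\kappa^{\prime},s_{i}^{\prime},s_{j}^{\prime})}$ is well defined as \emph{the} such isomorphism, it equals our explicit $\Phi^{(c_{i},c_{j}),\infty}$, and Losev's $\mathfrak{wc}_{s\rightarrow s^{\prime}}$, being rank- and arrow-preserving, must coincide with the componentwise map built above; this simultaneously yields the first bullet.

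The main obstacle is the uniqueness step, together with the mild circularity in first restricting Losev's map to the components $(i,j)$. A priori the finite-$e$ level-two Fock-space crystal can have several isomorphic connected components, so a rank-preserving graph automorphism need not be visibly trivial; the point that removes this ambiguity is that the reduction of Theorem \ref{Th_FI} lands in the multiplicity-free $e=\infty$ situation, where the level-two decomposition into irreducibles has no repetitions, so that the footnote of Proposition \ref{uni} genuinely applies. Because our $\Phi_{\mathfrak{m}^{\infty}}^{\widetilde{\mathbf{c}},\infty}$ is manifestly the identity outside components $i,j$, the uniqueness then forces the abstractly defined $\mathfrak{wc}_{s\rightarrow s^{\prime}}$ to share this componentwise form rather than having to assume it at the outset. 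A secondary, purely bookkeeping point is to verify that Losev's notion of graph isomorphism, once transported through $\psi$, matches the arrow-label-preserving and rank-preserving notion used in the footnote; this is a matter of unwinding the definitions in \S\ref{Giso} and of Proposition \ref{Prop_Iso} rather than a genuine difficulty.
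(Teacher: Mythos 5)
You should first note that the paper contains no proof of Proposition \ref{Prop_bijecLOsev}: it is imported wholesale from Losev \cite[Prop. 5.9]{Lo}, including the uniqueness clause, and the paper then \emph{uses} it (specifically the uniqueness) to prove the final theorem identifying $\mathfrak{wc}_{s\rightarrow s^{\prime}}$ with $\Psi^{\mathbf{c}}_{\mathbf{m},\mathbf{m}^{\prime}}$. Your proposal runs this logic backwards: you essentially reprove that final theorem (translate to the Uglov picture via Proposition \ref{Prop_Iso}, match the essential wall with a wall $\mathfrak{m}_{i,j,N}^{\mathbf{c},e}$, apply Theorem \ref{Th_FI}) and then try to read the proposition off from a uniqueness argument. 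The translation steps themselves are fine and agree with computations the paper carries out elsewhere, but they are not a proof of this statement.

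The genuine gap is the uniqueness step, which is the only part of the statement that would actually require an argument once existence of $\mathfrak{wc}_{s\rightarrow s^{\prime}}$ is granted from \cite{Lo}. The footnote to Proposition \ref{uni} establishes uniqueness of a rank- and label-preserving graph isomorphism only for $e=\infty$, and the reason given there is precisely that the level-two $\mathfrak{g}_{\infty}$-Fock space is multiplicity-free. For finite $e$ this fails: a highest weight can occur with multiplicity at least $2$ in $\mathcal{G}_{e,\mathbf{m},\mathbf{c}}$, and since two vertices of the same weight $\Lambda_{\mathbf{c}}-\sum_{i}n_{i}\alpha_{i}$ automatically have the same rank $\sum_{i}n_{i}$, swapping two such isomorphic connected components yields a nontrivial graph automorphism preserving both the arrow labels and the rank. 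Your claim that the uniqueness ``descends to the finite-$e$ setting'' because Theorem \ref{Th_FI} computes the wall-crossing by an $e=\infty$ algorithm is a non sequitur: rigidity is a property of the pair of target graphs, not of one particular map between them, and the relevant graphs here are the finite-$e$ ones. The uniqueness asserted in the proposition therefore cannot be extracted from the crystal combinatorics of the earlier sections alone; in the paper it is part of what is imported from Losev's construction, where the wall-crossing bijection is pinned down by additional structure (compatibility with the Heisenberg action and the perverse equivalence), not merely by preserving colors and rank. Without that input your argument establishes neither the uniqueness clause nor, consequently, the componentwise form of $\mathfrak{wc}_{s\rightarrow s^{\prime}}$, which you derive from it.
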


Let us look at the essential hyperplanes and check that they are the same as
the ones define in \S \ref{act2}. Assume that we are in an essential
hyperplane. This implies that there exists $a\in\mathbb{Z}$ such that 
\begin{equation*}
s_{i}-s_{j}-a\in\kappa^{-1}\mathbb{Z}\qquad\text{ and }\qquad
m_{i}-m_{j}=e.a. 
\end{equation*}
Thus there exists $x\in\mathbb{Z}$ such that $s_{i}-s_{j}=a+\kappa^{-1}x$.
We obtain: 
\begin{equation*}
e(d_{i}-d_{j}-x)=r(c_{j}-c_{i}+a). 
\end{equation*}
This implies that $c_{j}-c_{i}=a+e\mathbb{Z}$ because $e$ and $r$ are
relatively prime. Thus, we have : 
\begin{equation*}
c_{j}-m_{j}-(c_{i}-m_{i})\in e\mathbb{Z}. 
\end{equation*}
Conversely, if we have that $c_{j}-m_{j}-(c_{i}-m_{i})=eN$ with $N\in 
\mathbb{Z}$ with the above relation between the parameters. We obtain that 
\begin{equation*}
m_{i}-m_{j}=c_{i}-c_{j}+eN. 
\end{equation*}
Let us then set $a=c_{i}-c_{j}+eN$ which is in $\mathbb{Z}$. Now by (\ref%
{defc}) we have $s_{i}=\kappa^{-1}d_{i}+c_{i}$ and $s_{j}=\kappa
^{-1}d_{j}+c_{j}$. We thus get%
\begin{equation*}
s_{i}-s_{j}-a=\kappa^{-1}(d_{i}-d_{j})+(c_{i}-c_{j})-a=%
\kappa^{-1}(d_{i}-d_{j}+rN)\in\kappa^{-1}\mathbb{Z}. 
\end{equation*}

We now want to check that the wall crossing bijections correspond to our
bijections defined in \S \ref{defbij}.

\begin{theorem}
The wall crossing bijection $\mathfrak{wc}_{s\to s^{\prime}}$ corresponds to 
$\Psi^{\mathbf{c}}_{\mathbf{m},\mathbf{m}^{\prime}}$.
\end{theorem}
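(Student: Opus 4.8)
The plan is to show that the two bijections coincide by reducing both to the same combinatorial object, namely the canonical crystal isomorphism of Proposition \ref{Prop_bijecLOsev} and the wall-crossing map $\Psi_{\mathbf{m},\mathbf{m}^{\prime}}^{\mathbf{s}}$ of \S\ref{defbij}. The essential point is that both maps have already been characterized by a \emph{uniqueness} property, so the strategy is to verify that $\Psi_{\mathbf{m},\mathbf{m}^{\prime}}^{\mathbf{c}}$ satisfies the defining property of $\mathfrak{wc}_{s\to s^{\prime}}$. First I would invoke Proposition \ref{Prop_Iso} to translate the problem from the Cherednik-algebra graphs $\mathcal{G}_{s}$ and $\mathcal{G}_{s^{\prime}}$ into the extended JMMO graphs $\mathcal{G}_{e,\mathbf{m},\mathbf{c}}$ and $\mathcal{G}_{e,\mathbf{m}^{\prime},\mathbf{c}}$, where $\mathbf{m}$ and $\mathbf{m}^{\prime}$ are the multicharges attached to $s$ and $s^{\prime}$ via $m_{j}=s_{j}-j/(\kappa l)$. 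Since $s$ and $s^{\prime}$ are separated by exactly one essential wall parametrized by $(i,j)$, the computation following Proposition \ref{Prop_bijecLOsev} shows this essential wall corresponds precisely to a wall $\mathfrak{m}_{i,j,N}^{\mathbf{c},e}$ of the kind considered in \S\ref{defbij}, with the separation relation $c_{j}-m_{j}-(c_{i}-m_{i})=eN$.

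Next I would record that the identification in Proposition \ref{Prop_Iso} is an \emph{equivalence} of graphs (the bijection on vertices is the identity $\Psi=\mathrm{id}$ and only the coloring is reparametrized by $\psi$), so a graph isomorphism $\mathcal{G}_{s}\to\mathcal{G}_{s^{\prime}}$ preserving arrow labels and rank corresponds under this equivalence to a graph isomorphism $\mathcal{G}_{e,\mathbf{m},\mathbf{c}}\to\mathcal{G}_{e,\mathbf{m}^{\prime},\mathbf{c}}$ with the same properties. By Theorem \ref{Th_FI}, the map $\Psi_{\mathbf{m},\mathbf{m}^{\prime}}^{\mathbf{c}}$ is a $\mathfrak{g}_{e}$-crystal isomorphism between these two graphs, and because it is built componentwise from the level-$2$ maps $\Phi^{(c_{i},c_{j}),\infty}$ of Proposition \ref{uni}(3), which are rank preserving, the composite $\Psi_{\mathbf{m},\mathbf{m}^{\prime}}^{\mathbf{c}}$ preserves the rank of the $l$-partitions. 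Thus $\Psi_{\mathbf{m},\mathbf{m}^{\prime}}^{\mathbf{c}}$ is a rank-preserving, label-preserving graph isomorphism between $\mathcal{G}_{s}$ and $\mathcal{G}_{s^{\prime}}$.

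Finally I would appeal to the uniqueness clause in Proposition \ref{Prop_bijecLOsev}: the wall-crossing bijection $\mathfrak{wc}_{s\to s^{\prime}}$ is \emph{the unique} graph isomorphism between $\mathcal{G}_{s}$ and $\mathcal{G}_{s^{\prime}}$ preserving both arrow labels and rank (with the added feature that it is the identity on all components $\lambda^{k}$ with $k\neq i,j$). Since $\Psi_{\mathbf{m},\mathbf{m}^{\prime}}^{\mathbf{c}}$ shares all these properties — it acts as the identity on the components other than $i,j$ by its very construction in \S\ref{defbij}, and I have just argued it preserves labels and rank — uniqueness forces $\mathfrak{wc}_{s\to s^{\prime}}=\Psi_{\mathbf{m},\mathbf{m}^{\prime}}^{\mathbf{c}}$.

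I expect the main obstacle to be the careful bookkeeping needed to align the two parametrizations, specifically checking that the \emph{orientation} convention of the wall crossing in Losev's setup (the direction $s\to s^{\prime}$) matches the sign convention (\ref{sens}) that pins down which of $\Phi^{(c_{i},c_{j}),\infty}$ or its inverse is used in defining $\Psi_{\mathbf{m},\mathbf{m}^{\prime}}^{\mathbf{c}}$. Concretely I would need to confirm that the inequality $m_{i}-m_{j}=a$ together with $s_{i}-s_{j}-a\in\kappa^{-1}\mathbb{Z}$, after passing to $\mathbf{c}$ and $N$, yields the correct side of the wall so that the crystal isomorphism goes from $\mathcal{G}_{e,\mathbf{m},\mathbf{c}}$ to $\mathcal{G}_{e,\mathbf{m}^{\prime},\mathbf{c}}$ in the same direction as $\mathfrak{wc}_{s\to s^{\prime}}$; a sign error here would produce the inverse bijection rather than the asserted equality. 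Once the orientation is matched, the uniqueness statements make the identification immediate.
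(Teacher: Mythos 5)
Your argument is essentially the paper's own proof: reduce to the components $i,j$ (both maps fix the others), observe that both $\mathfrak{wc}_{(\kappa,s_i,s_j)\to(\kappa',s_i',s_j')}$ and $\Psi^{(c_i,c_j)}_{(m_i,m_j),(m_i',m_j')}$ are graph isomorphisms preserving arrow labels and rank, and conclude by the uniqueness of such an isomorphism (the footnote to Proposition~\ref{uni}). The orientation bookkeeping you flag as a potential obstacle is not treated separately in the paper either, since the uniqueness statement already pins down the map regardless of which side of the wall one starts from.
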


\begin{proof}
Since both $\mathfrak{wc}_{s\rightarrow s^{\prime }}$ and $\Psi _{\mathbf{m},%
\mathbf{m}^{\prime }}^{\mathbf{c}}$ modify the component of indices $i$ and $%
j$ in the $l$-partition it suffices to show that for any bipartition $%
(\lambda ^{1},\lambda ^{2})$ we have $\mathfrak{wc}_{(\kappa
,s_{i},s_{j})\rightarrow (\kappa ^{\prime },s_{i}^{\prime },s_{j}^{\prime
})}(\lambda ^{i},\lambda ^{j})=\Psi _{(m_{i},m_{^{j}}),(m_{i}^{\prime
},m_{j}^{\prime })}^{(c_{i},c_{j})}(\lambda ^{i},\lambda ^{j})$. But $%
\mathfrak{wc}_{(\kappa ,s_{i},s_{j})\rightarrow (\kappa ^{\prime
},s_{i}^{\prime },s_{j}^{\prime })}$ and $\Psi
_{(m_{i},m_{^{j}}),(m_{i}^{\prime },m_{j}^{\prime })}^{(c_{i},c_{j})}$ are
graph isomorphisms which preserve the labelling of the arrows and the rank
of the bipartitions. Since there is only one such isomorphism, they coincide.
\end{proof}

\bigskip

\begin{remark}
Assume now $s:=(\kappa,\mathbf{s})$ where $\kappa$ is a rational negative
number. To each $l$-partition ${\boldsymbol{\lambda}}=(\lambda^{1},\ldots,%
\lambda^{l})$ we associate its conjugate ${\boldsymbol{\lambda}}%
^{\#}=((\lambda^{l})^{\#},\ldots,(\lambda^{1})^{\#})$ where $(\lambda
^{c})^{\#}$ is the conjugate of the partition $\lambda^{c}$ for any $%
k=1,\ldots,l.$ There is a natural bijection between the nodes of ${%
\boldsymbol{\lambda}}$ and ${\boldsymbol{\lambda}}^{\#}$ which maps $%
\gamma=(a,b,c)\in{\boldsymbol{\lambda}}$ on $\gamma^{\#}=(b,a,l-c)$. Set $%
s^{\#}:=(-\kappa,\mathbf{s}^{\#})$ where $\mathbf{s}^{\#}=(-s_{l},%
\ldots,-s_{1})$. The map $\#$ then defines an anti-isomorphism from the
graphs $\mathcal{G}_{s}$ to $\mathcal{G}_{s^{\#}}$ which sends each $l$%
-partition ${\boldsymbol{\lambda}}$ on ${\boldsymbol{\lambda}}^{\#}$ and
each arrow ${\boldsymbol{\lambda}}\overset{z}{\rightarrow}{\boldsymbol{\mu}}$
on ${\boldsymbol{\lambda}}^{\#}\overset{-z}{\rightarrow}{\boldsymbol{\mu}}%
^{\#}$.\ This implies that the wall crossing maps for $\kappa$ and $-\kappa$
coincide up to conjugation by $\#$.
\end{remark}

\bigskip %\bibitem{A} \textsc{Ariki, S.}
%Representations of quantum algebras and combinatorics of Young tableaux.
%Translated from the 2000 Japanese edition and revised by the author. University Lecture Series, 26. American Mathematical Society, Providence, RI, 2002. viii+158 pp.

%\bibitem{BK}
%\textsc{Brundan, J and Kleshchev, A}
%Graded decomposition numbers for cyclotomic Hecke algebras.
%Adv. Math. 222 (2009), no. 6, 1883-1942.

%\bibitem{GGOR}
%\textsc{Ginzburg, V.,  Guay, N.,  Opdam, E. and Rouquier, R.}
%On the category O for rational Cherednik algebras.
%Invent. Math. 154 (2003), no. 3, 617-651.

\end{document}